\newtheorem{thm}{Theorem}[section]
\newtheorem*{thm*}{Theorem}
\newtheorem{lem}[thm]{Lemma}
\newtheorem{prop}[thm]{Proposition}
\newtheorem{cor}[thm]{Corollary}
\newtheorem*{cor*}{Corollary}
\theoremstyle{definition}
\newtheorem{defn}[thm]{Definition}
\newtheorem{cons}[thm]{Construction}
\newtheorem{assum}[thm]{Assumption}
\theoremstyle{remark}
\newtheorem{example}[thm]{Example}
\newtheorem{rem}[thm]{Remark}
\numberwithin{equation}{section}
\DeclareRobustCommand\sm{\mathbin{\mathpalette\smaux\relax}}
\newcommand\smaux[2]{\mspace{-6mu}
\raisebox{\rsmraise{#1}\depth}{\rotatebox[origin=c]{-25}{$#1\smallsetminus$}}
 \mspace{-6mu}
}
\newcommand\rsmraise[1]{%
  \ifx#1\displaystyle .4\else
    \ifx#1\textstyle .4\else
      \ifx#1\scriptstyle .3\else
        .45%
      \fi
    \fi
  \fi}
\newcommand{\wt}[1]{{\widetilde{#1}}}
\newcommand{\op}{^\mathrm{op}}
\newcommand{\cat}[1]{\mathrm{#1}}
\newcommand{\on}{\operatorname}
\newcommand{\Map}{\mathrm{Map}}
\newcommand{\Hom}{\mathrm{Hom}}
\newcommand{\id}{\mathrm{id}}
\newcommand{\Conf}{\mathrm{Conf}}
\newcommand{\con}{\mathrm{con}}
\newcommand{\Z}{\mathbf{Z}}
\newcommand{\Q}{\mathbf{Q}}
\newcommand{\R}{\mathbf{R}}
\newcommand{\K}{\mathbf{K}}
\renewcommand{\k}{\mathbf{k}}
\newcommand{\C}{\mathbf{C}}
\newcommand{\G}{\mathcal{G}}
\newcommand{\ra}{\longrightarrow}
\newcommand{\hra}{\hookrightarrow}
\newcommand{\sra}{\twoheadrightarrow}
\newcommand{\fin}{\mathrm{Fin}}
\newcommand{\sfin}{\Delta\mathrm{Fin}}
\newcommand{\AS}{\mathrm{AS}}
\newcommand{\FM}{\mathrm{FM}}
\newcommand{\blr}{\mathrm{Bl}^{\R}}
\newcommand{\blc}{\mathrm{Bl}^{\C}}
\newcommand{\bll}{\mathrm{Bl}^{log}} 
\newcommand{\bl}{\mathrm{Bl}} 
\newcommand{\kn}{\mathrm{KN}}
\renewcommand{\for}{\mathrm{For}} 
\newcommand{\forset}{\mathsf{For}} 
\newcommand{\logX}{\mathsf{X}}
\newcommand{\logY}{\mathsf{Y}}
\newcommand{\logZ}{\mathsf{Z}}
\newcommand{\logFM}{\mathsf{FM}}
\newcommand{\comment}[1]{{\color{blue}{#1}}}
\title{An algebro-geometric model for the configuration category}
\author{Pedro Boavida de Brito}
\address{Dept. of Mathematics, Instituto Superior Tecnico, Univ.~of Lisbon, Av.~Rovisco Pais, Lisboa, Portugal}%
\email{pedrobbrito@tecnico.ulisboa.pt}
\author{Geoffroy Horel}
\address{Université Sorbonne Paris Nord, Laboratoire Analyse, Géométrie et Applications, CNRS (UMR 7539), 93430, Villetaneuse, France.}
\email{horel@math.univ-paris13.fr}
\author[D. Kosanovi\'{c}]{Danica Kosanovi\'{c}}
\address{Department of Mathematics, ETH Zürich, Rämistrasse 101, 8092 Zürich, Switzerland}
\email{danica.kosanovic@math.ethz.ch}
\date{\today}
\begin{document}

\begin{abstract}
    Using log-geometry, we construct a model for the configuration category of a smooth algebraic variety. As an application, we prove the formality of certain configuration spaces.

\end{abstract}

\keywords{}
\maketitle

\section{Introduction}

Given a finite set $S$ and a manifold $M$, we denote by $\Conf_S(M)$ the space of embeddings of $S$ in $M$. These spaces play a fundamental role in differential and geometric topology. They have also been of interest in arithmetic geometry.  In particular, there is a tradition of studying the motivic structure on the fundamental group of configuration spaces of algebraic curves (see for example \cite{iharabraid,kohnooda}).

From the point of view of topology, an important fact is that the collection of these configuration spaces for all finite sets $S$ are related by a rich structure. The most obvious part of this structure is given by the \emph{forgetful} maps
\[\Conf_S(M)\to\Conf_{S-s}(M)\]
that discard one of the points $s \in S$. In the other direction, there are \emph{insertion} maps
\[\Conf_S(M)\times\Conf_T(D)\to\Conf_{(S-s)\sqcup T}(M).\]
that replace the point labelled $s$ by a configuration of $T$ points in a small enough ball $D$ around that point. This notation is somewhat abusive as the source is usually not a product but rather the total space of a fiber bundle over $\Conf_S(M)$ whose fiber is $\Conf_T(D)$. There are also some choices involved in making this map (but these choices are irrelevant in the homotopy category). When $M$ is a smooth manifold, there is a beautiful geometric incarnation of these insertion maps: replacing configuration spaces by their Axelrod-Singer compactifications \cite{axelrodsinger}, the insertion map becomes simply the inclusion of a stratum. This point of view has one further merit which interests us in this paper. The Axelrod-Singer compactification has an algebraic predecessor, due to Fulton-MacPherson \cite{fultonmacpherson}, and so there is a natural algebro-geometric analog of the insertion maps when $M$ is a smooth algebraic variety.


When the manifold is $\R^n$, the forgetful and insertion operations are the structure maps of the little $n$-disks operad. For a more general manifold, a convenient object that packages all of this structure is the ``configuration category'' of the manifold \cite{boavidaspaces}. As the name suggests, the configuration category is a category or, more accurately, an $(\infty,1)$-category. Roughly speaking, its objects are configuration of any number of points in the manifold and its morphisms are paths of configurations in which points are allowed to collide, but if that happens, they have to stay together until the end. As it turns out, the little $n$-disks operad and the configuration category of $\R^n$ are equivalent pieces of data in a homotopical sense \cite{boavidaspaces}. When $M$ is a differentiable manifold, the configuration category has an ``Axelrod-Singer model'' where the insertion and forgetful maps correspond to source maps from morphism to object spaces. 


It has long been conjectured that the little disks operad has an algebro-geometric origin. That goes back at least to Beilinson, who suggested it in a letter to Kontsevich, and Morava who asks \cite{Morava}: is the little disks operad defined over $\Q$? This conjecture has been settled in the work of Vaintrob \cite{vaintrobmoduli} and Dupont-Panzer-Pym \cite{dupontlogarithmic} using the theory of log-schemes. More precisely, these authors construct an operad in a category of log-schemes over the complex numbers whose Kato-Nakayama realization is a model for the (framed) little $2$-disks operad.

\medskip
The goal of the present paper is to construct an algebro-geometric model for the configuration category of \emph{any} algebraic variety. Again it is built using the theory of log-schemes. To state our main theorem, we regard the configuration category as a space-valued functor on $\Delta\cat{Fin}$, the category of simplices of the nerve of the category of finite sets (remark \ref{rem:con}). Our main theorem is the following.

\begin{thm*}
Let $X$ be a smooth algebraic variety over a field $K$. 
There is a functor from $\Delta\cat{Fin}\op$ to log-schemes over $K$ -- denoted $\con^{log}(X)$ -- with the following properties.
\begin{enumerate}
\item Its value on a finite set $S$ is the Fulton-MacPherson scheme $\FM_S(X)$ with its divisorial log-structure.
\item If the base field is $\C$ then the Kato-Nakayama realization of $\con^{log}(X)$ is isomorphic to the Axelrod-Singer model $\con^{AS}(X(\C))$ for the configuration category of $X(\C)$.
\end{enumerate}
\end{thm*}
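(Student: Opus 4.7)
The plan is to build $\con^{log}(X)$ as a simplicial log-scheme whose value on a chain $S_\bullet = (S_0 \to S_1 \to \cdots \to S_n)$ in $\Delta\cat{Fin}$ is assembled from Fulton-MacPherson schemes via an iterated fiber product. One natural candidate is
\[
\con^{log}(X)(S_\bullet) \;=\; \FM_{S_0/S_1}(X) \times_{\FM_{S_1}(X)} \FM_{S_1/S_2}(X) \times_{\FM_{S_2}(X)} \cdots \times_{\FM_{S_{n-1}}(X)} \FM_{S_{n-1}/S_n}(X),
\]
where $\FM_{S/T}(X)$ denotes the relative Fulton-MacPherson scheme associated to a surjection $S \to T$, and the log-structure is pulled back from the divisorial log-structures on each factor. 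On a length-zero chain the fiber product degenerates to $\FM_S(X)$ with its divisorial log-structure, giving property (i) immediately. Functoriality on $\Delta\cat{Fin}\op$ reduces to checking that face maps of $\Delta\cat{Fin}$ (which either drop an intermediate $S_i$ or collapse a composition $S_i \to S_{i+1} \to S_{i+2}$) translate into canonical morphisms of fiber products, which follows from the universal properties of the Fulton-MacPherson blow-up construction, while degeneracies correspond to inserting an identity factor.

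For property (ii) I would invoke the classical comparison theorem: for a log-scheme whose log-structure is divisorial for a simple normal crossings divisor, the Kato-Nakayama realization is canonically homeomorphic to the real oriented blow-up of the underlying complex analytic space along that divisor. Applied to $\FM_S(X)$ and its boundary $\FM_S(X) \sm \Conf_S(X)$, this produces precisely the Axelrod-Singer compactification $\FM_S^{AS}(X(\C))$, so the equivalence $\kn(\FM_S(X)) \cong \FM_S^{AS}(X(\C))$ holds levelwise for length-zero chains. The theorem then amounts to upgrading this identification into an equivalence of functors $\kn(\con^{log}(X)) \cong \con^{AS}(X(\C))$ on $\Delta\cat{Fin}\op$.

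The main obstacle is the compatibility of $\kn$ with the fiber products used to define the higher simplices. Kato-Nakayama does not preserve arbitrary fiber products, so one must verify that the specific ones in the construction behave well. This should follow from log-smoothness of the forgetful maps $\FM_{S/T}(X) \to \FM_T(X)$, which in turn reduces to a statement about iterated smooth blow-ups of smooth centres: the Fulton-MacPherson tower is built precisely so that all such maps are log-smooth for the divisorial log-structures. Once this is established, one has a combinatorial bookkeeping task: to match the stratification of the Axelrod-Singer tower, whose closed strata are naturally indexed by chains of surjections of finite sets, with the image under $\kn$ of the simplex log-schemes $\con^{log}(X)(S_\bullet)$, naturally in $S_\bullet$. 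This matching is essentially forced by the fact that both sides describe the same filtration of $\FM_{S_0}^{AS}(X(\C))$ by collision patterns, but carrying it through coherently as a natural isomorphism of simplicial objects is where the real work lies.
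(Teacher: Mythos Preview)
Your strategy is plausible but takes a harder road than the paper, and it leaves the genuinely difficult step unresolved.

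The paper avoids fiber products altogether. It first factors through an auxiliary category $\for$ of forests via a functor $\Delta\fin\to\for$, and then defines the value of $\con^{log}(X)$ on a forest $(S,\Phi)$ to be the \emph{closed stratum} $\FM_\Phi(X)\subset\FM_S(X)$, equipped with the log-structure pulled back from the divisorial one on $\FM_S(X)$. Thus every simplex is a single smooth subvariety of a single Fulton--MacPherson scheme, not an external fiber product. Functoriality then reduces to two generating families of maps in $\for$: inclusions of strata $\FM_\Psi(X)\hookrightarrow\FM_\Phi(X)$ for $\Phi\subset\Psi$, which are trivially log-morphisms for the restricted log-structures, and forgetful maps $\FM_T(X)\to\FM_S(X)$ for injections $S\hookrightarrow T$, which are shown to be log-morphisms by factoring through an intermediate blow-up $\FM_{T,S}(X)$.

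For the Kato--Nakayama comparison the paper proves a single clean statement about wonderful compactifications: if $\logY_\G$ is the log-scheme associated to the normal crossing divisor of a wonderful compactification, then $\kn(\logY_\G)\cong Y_\G^\R$ as $P(\G)\op$-stratified spaces. The proof is an induction on the size of the building set, using that $\kn$ turns a log blow-up into a real oriented blow-up, together with the elementary fact that $\kn$ commutes with pullback along closed immersions. Applied to $\FM_S(X)$ this gives $\kn(\logFM_S(X))\cong\AS_S(X(\C))$ compatibly with the forest stratification, and the identification of forgetful maps follows by density of the open stratum.

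By contrast, your approach manufactures the very obstacle you then have to overcome: compatibility of $\kn$ with the iterated fiber products defining the higher simplices. You flag this honestly, but ``should follow from log-smoothness'' is not a proof, and log-smoothness alone does not guarantee that $\kn$ preserves a given fiber square. There are also loose ends in the setup: the ``relative Fulton--MacPherson scheme'' $\FM_{S/T}(X)$ is not a standard object and would need a precise definition (with its log-structure) before anything can be checked; the objects of $\Delta\fin$ are chains of \emph{arbitrary} maps of finite sets, not only surjections, so your formula does not literally cover all simplices; and on a length-zero chain the empty fiber product does not obviously yield $\FM_S(X)$ without an extra convention. The paper's recognition that the simplices are simply closed strata with restricted log-structure dissolves all of these issues at once.
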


As a consequence, we obtain a log-model for the little $n$-disks operad for every $n$, suitably interpreted. By that we mean, we regard the little disks operad as a functor on a category of forests, in contrast to Dupont-Panzer-Pym for $n = 2$, who take a strict view of the operad but expand the classical meaning of \emph{morphism} of log scheme.

\medskip
An immediate corollary of the theorem is the following result which was the original motivation behind this paper.
\begin{cor*}
Let $X$ be a smooth scheme over $\Q$. The profinite completion of $\con^{AS}(X(\C))$, the Axelrod-Singer model for the configuration category of $X(\C)$, has an action of $\mathrm{Gal}(\overline{\Q}/\Q)$. This action extends the natural Galois action on the profinite completion of the configuration spaces of points in $X(\C)$.
\end{cor*}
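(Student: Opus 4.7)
The strategy is to transport the Galois action from log schemes over $\Q$ to profinite spaces via a log \'etale homotopy type functor, and then to identify the resulting diagram with the profinite completion of $\con^{AS}(X(\C))$.

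By the main theorem, the smooth $\Q$-scheme $X$ defines a functor
\[
\con^{log}(X) : \Delta\cat{Fin}\op \longrightarrow \cat{LogSch}/\Q
\]
whose value on $S$ is $\FM_S(X)$ equipped with its divisorial log structure. Base change along $\operatorname{Spec}\overline{\Q}\to\operatorname{Spec}\Q$ produces a functor $\con^{log}(X_{\overline{\Q}})$, and, since the original is defined over $\Q$, the group $\mathrm{Gal}(\overline{\Q}/\Q)$ acts on this base-changed functor through its action on $\overline{\Q}$.

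Next I compose with the profinite log \'etale homotopy type (Friedlander--Artin--Mazur applied to Kato's log \'etale site), yielding a functor from $\Delta\cat{Fin}\op$ to profinite spaces that carries a continuous $\mathrm{Gal}(\overline{\Q}/\Q)$-action. To identify this with the profinite completion of $\con^{AS}(X(\C))$, I fix an embedding $\overline{\Q}\hookrightarrow\C$ and invoke two ingredients: (i) the profinite log \'etale homotopy type is invariant under extension of algebraically closed base field, a log analog of Artin's comparison following from proper base change together with the properness of $\FM_S(X)$; and (ii) for a log smooth log scheme over $\C$, the Kato--Nakayama space and the log \'etale site have the same profinite homotopy type, by the log Betti-to-log \'etale comparison of Kato--Nakayama. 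Combining (i), (ii) and part (2) of the main theorem supplies the desired $\mathrm{Gal}(\overline{\Q}/\Q)$-equivariant identification.

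Compatibility with the classical Galois action on $\widehat{\Conf}_S(X(\C))$ is then automatic: the open embedding $\Conf_S(X)\hookrightarrow \FM_S(X)$ is defined over $\Q$ and is a homotopy equivalence on Kato--Nakayama realizations, so the two sources of Galois action agree on $\widehat{\Conf}_S$. The main obstacle is establishing the log analog of Artin's comparison at the level of profinite \emph{homotopy types} (rather than merely cohomology) and doing so functorially in $\Delta\cat{Fin}\op$. Because the divisorial log structure on $\FM_S(X)$ is log smooth and proper, I would reduce this to the cohomological results of Kato--Nakayama and Nakayama--Ogus and upgrade them homotopically by a hypercover argument carried out simultaneously in the diagram category, so that all structure maps of $\con^{log}(X)$ are respected coherently.
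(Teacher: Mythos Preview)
Your overall strategy coincides with the paper's: use the main theorem to produce $\con^{log}(X)$, pass to a profinite log homotopy type carrying the Galois action, and then identify this with the profinite completion of the Kato--Nakayama realization, which is $\con^{AS}(X(\C))$ by the main theorem. Two points deserve comment.

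First, the paper does not carry out the homotopy-type comparison by hand. It invokes the theorem of Carchedi--Scherotzke--Sibilla--Talpo, which identifies the profinite completion of the Kato--Nakayama space of a (suitably nice) log scheme with the profinite \'etale homotopy type of its \emph{infinite root stack}. This single black box replaces your steps (i)--(ii) and the proposed hypercover upgrade of the cohomological Kato--Nakayama comparison. Your route via Kato's log \'etale site is closely related, but you are effectively proposing to reprove a substantial piece of that paper; citing it is both shorter and safer for functoriality in $\Delta\fin$.

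Second, there is a genuine error in your step (i): you appeal to proper base change ``together with the properness of $\FM_S(X)$'', but $\FM_S(X)$ is \emph{not} proper unless $X$ is (the Fulton--MacPherson ``compactification'' only compactifies the diagonal directions). Since the corollary is stated for an arbitrary smooth $X$ over $\Q$, this step fails as written. The paper handles the analogous base-change issue for the underlying schemes by using Artin's comparison together with \emph{smooth} base change, and then passes to the log setting via the result cited above. You should likewise replace the appeal to properness by smoothness (or simply defer to the cited comparison theorem).
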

Note that when $X$ is affine $n$-space, we obtain a homotopy coherent Galois action on the profinite completion of $E_{2n}$ as opposed to an action which is coherent up to homotopy. (The latter, in the case $n = 1$, amounts to a map from $\mathrm{Gal}(\overline{\Q}/\Q)$ to the group of homotopy automorphisms of the profinite completion of $E_2$ which is isomorphic to the Grothendieck-Teichmuller group by \cite{horel}.)

Combining these Galois actions with the additivity theorem for configuration categories we can deduce formality, in the sense of rational homotopy theory, for configuration spaces in $X(\C) \times \R$ where $X$ is a smooth scheme over $\C$ (Corollary \ref{cor}). In future work, we will use this to prove that the homological Goodwillie-Weiss spectral sequence for knots in $X(\C) \times \R$ collapses rationally.

\subsection*{Acknowledgements}
We thank Dmitry Vaintrob, Dan Petersen, Clément Dupont for useful conversations. P.B. was supported by FCT 2021.01497.CEECIND. G.H. was supported by the project  ANR-20-CE40-0016 HighAGT. Both P.B. and G.H. are supported by a project PHC Pessoa.

\section{Categories of Forests}

\subsection{Forests}
\begin{defn}
	Fix a finite set $S$. A \emph{forest} on $S$ is a set $\Phi$ of non-empty subsets of $S$ such that
\begin{enumerate}
	\item Every one-point subset of $S$ is contained in $\Phi$, and is called a \emph{leaf} of $\Phi$.
	\item Any two elements of $\Phi$ are either disjoint or comparable for the inclusion relation.
\end{enumerate}  
	A \emph{tree} on a set $U$ is a forest containing the set $U$, which is called the \emph{root} of $\Phi$. Moreover, for a forest $\Phi$ on $S$ and a maximal element $U$ of $\Phi$, the set of elements in $\Phi$ contained in $U$ forms a tree on $U$. Clearly, any forest is a disjoint union of trees, called the \emph{trees of the forest} $\Phi$. 
\end{defn}

\begin{rem}
	Given a forest $\Phi$ on $S$ as above, we associate to it a graph $G_\Phi$ 
as follows.
\begin{enumerate}
	\item If $\Phi$ consists of trees $\Phi_1,\dots,\Phi_n$, then $G_\Phi$ is the disjoint union $G_\Phi=\bigsqcup_{i=1}^nG_{\Phi_i}$.
	\item If $\Phi$ is the unique tree on a one-element set $S=\{s\}$, then $G_\Phi$ has two vertices connected by one edge. One vertex is a leaf decorated by $s$ and the other is a non-decorated root.
	\item If $\Phi$ is a tree on a set $U$ with strictly more than one element, then we construct a contractible graph (a tree in graphical sense) whose vertices are the elements of $\Phi$. We add one edge connecting vertices $A$ and $B$ if and only if they are comparable with respect to inclusion and there are no other elements of $\Phi$ between them (i.e.\ if say $A\subsetneq B$, then there is no element $C$ in $\Phi$ such that $A\subsetneq C\subsetneq B$). Vertices with one incident edge correspond either to $U$ (the root), or to the one-element sets (the leaves). There are no \emph{unary} vertices, i.e. vertices having two incident edges.
\end{enumerate}
\end{rem}

For a finite set $S$, we write $P(S)$ for the poset of non-empty subsets of $S$. The forests on a set $S$ form a poset $\forset(S)$ under the inclusion relation, as subsets of $P(S)$. This poset has a minimal element given by the forest with only one point sets.

Observe that if $j\colon S\hra T$ is an injection, then any forest on $T$ induces a forest on $S$ by intersecting it with $S$. This operation is clearly a map of posets 
\[j^{-1}\colon\forset(T)\to\forset(S),\]
so that $S\mapsto \forset(S)$ is a contravariant functor from finite sets and injections to posets. 

We define $\for'$ to be the Grothendieck construction of this functor. That is, an object of $\for'$ is a pair $(S,\Phi)$ with $S$ a finite set and $\Phi$ a forest on $S$. A morphism from $(S,\Phi)$ to $(T,\Psi)$ is an injection $j\colon S\to T$ such that $\Phi \subset j^{-1}\Psi$. The actual category $\for$ of forests that we will use is a certain quotient of $\for'$, defined as follows. 

\begin{defn}\label{defi : category of forests}
	The category $\for$ has the same objects as $\for'$, and if $(S,\Phi)$ and $(T,\Psi)$ are objects of $\for'$, we define
\[\Hom_{\for}((S,\Phi),(T,\Psi))\coloneqq\Hom_{\for'}((S,\Phi),(T,\Psi))/\sim,\]
	where the equivalence relation $\sim$ identifies two injective maps $i,j\colon S\to T$ if $i^{-1}\Psi=j^{-1}\Psi$. The composition is induced from the composition in $\for'$.
\end{defn}

Concretely, we think of an object in $\for$ as a contractible graph whose vertices are not unary. (An unary vertex is one having exactly two adjacent edges.) A morphism in the opposite category corresponds to contracting inner edges, making sure the resulting forest has no unary vertices, or to removing a root edge. This is reminiscent of \cite{dend}. We can make this more precise as follows. 

\begin{lem}\label{lem: forests}
The category $\for$ is equivalent to the category whose objects are posets $V$ satisfying 
\begin{enumerate}
\item Given $v \in V$, the set $\{w | w \leq v\}$ is totally ordered.
\item For $S$ the set of maximal elements of $V$, the map $V \to P(S)$ which to a vertex $v$ assigns the set of maximal elements that are bigger than $v$, is injective.
\end{enumerate}
and whose morphisms are injective maps of posets preserving \emph{independence}, i.e., sending uncomparable elements to uncomparable elements.
\end{lem}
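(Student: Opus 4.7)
The plan is to exhibit an explicit equivalence $F \colon \for \to \mathcal{P}$, where $\mathcal{P}$ denotes the poset category in the statement, and verify it is essentially surjective and fully faithful. On objects, I send $(S, \Phi)$ to the poset $V(\Phi) := \Phi$ with the reverse-inclusion order; the maximal elements then correspond canonically to $S$. Condition~(i) follows from the forest axiom: if $A, A' \in \Phi$ both contain $v \in \Phi$ (a nonempty subset of $S$), they share $v$, so they cannot be disjoint and are hence comparable. Condition~(ii) holds because each $v \in \Phi$, viewed as a subset of $S$, is determined by the singletons it contains (namely, the elements of $v$ itself), making the map $V(\Phi) \to P(S)$ injective.

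For morphisms, to a class $[j] \colon (S, \Phi) \to (T, \Psi)$ represented by an injection $j$ with $\Phi \subseteq j^{-1}\Psi$, I associate the map $F([j]) \colon \Phi \to \Psi$ defined by $A \mapsto \min\{B \in \Psi : j(A) \subseteq B\}$; the minimum exists uniquely by the forest axiom on $\Psi$, since any two candidates contain the nonempty $j(A)$ and must therefore be comparable. Routine checks show that $F([j])$ is monotone, injective (using that $j^{-1}(F([j])(A)) = A$, a consequence of the forest axiom together with $\Phi \subseteq j^{-1}\Psi$), and independence-preserving (a common upper bound would pull back to a common upper bound in $\Phi$). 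The delicate point is well-definedness on equivalence classes: two representatives $j, j'$ with $j^{-1}\Psi = j'^{-1}\Psi$ must produce the same poset map, which must be derived from the fact that the data entering the minimum depends only on the preimage forest.

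For essential surjectivity, given a poset $V$ satisfying~(i) and~(ii) with maximal elements $S$, the injection $V \hookrightarrow P(S)$ from~(ii) identifies $V$ with a sub-poset of $P(S)$, which by~(i) satisfies the forest axioms and hence is a forest $\Phi$ on $S$ with $V \cong V(\Phi) = F(S, \Phi)$. For full faithfulness, I would build an inverse on hom-sets: from a poset map $f \colon V(\Phi) \to V(\Psi)$ in $\mathcal{P}$, extract an injection $S \to T$ by tracking the behavior of $f$ on maximal elements, and verify that the associated equivalence class in $\for$ maps to $f$ under $F$. The main obstacle is this final matching of morphism sets: on one side, showing that $F([j])$ depends only on $j^{-1}\Psi$ rather than on the chosen $j$; on the other, recovering a canonical representative $j$ from a given $f$. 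Both amount to careful unpacking of the forest axioms, and I expect the bulk of the technical work to lie here.
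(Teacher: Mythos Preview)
Your plan is essentially the paper's: the object-level assignment $(S,\Phi)\mapsto(\Phi,\supseteq)$ is the same, your formula $A\mapsto\min\{B\in\Psi:j(A)\subseteq B\}$ unwinds the paper's composite $\Phi\hookrightarrow j^{-1}\Psi\to\Psi$, and your inverse via maximal elements is exactly the paper's functor $C\to\for$.

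However, the step you correctly flag as delicate does not go through as you describe it. Take $S=\{1\}$, $T=\{a,b\}$, $\Phi=\{\{1\}\}$, $\Psi=\{\{a\},\{b\},\{a,b\}\}$, and $j(1)=a$, $j'(1)=b$. Then $j^{-1}\Psi=\{\{1\}\}=j'^{-1}\Psi$, so $j\sim j'$ in $\for$; yet your formula yields $F(j)(\{1\})=\{a\}$ and $F(j')(\{1\})=\{b\}$, two distinct maps in $\mathcal P$. So the assertion that ``the data entering the minimum depends only on the preimage forest'' is false. In fact full faithfulness is out of reach with the stated definition of $\mathcal P$: the one-point poset has three morphisms to $\Psi$ in $\mathcal P$ (one for each element of $\Psi$, all vacuously injective and independence-preserving), while $\for$ has a single morphism between the corresponding objects.

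The paper's proof passes over precisely this point with ``it is clear that this functor factors through $\for$'', so your reconstruction is faithful to the paper but inherits the same gap. To make either argument work, the morphisms in $\mathcal P$ need to be constrained or identified further; as literally stated, the hom-sets do not match.
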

\begin{proof}
Denote by $C$ the category in the statement. Given a forest $(S, \Phi)$, the set $\Phi$ (whose elements we call vertices) has a poset structure given by reverse inclusion. Then $\Phi$ clearly satisfies condition (i) and condition (ii) rephrases the condition that the forest has no unary vertices.

Conversely, from any finite poset $V$ satisfying conditions (i) and (ii) we may define a forest as the image of the map $V \to P(S)$ in (ii), where $S$ is the set of maximal elements of $V$. This establishes a bijection between forests in the sense above and posets satisfying conditions (i) and (ii). (Note that for such posets, the map in (ii) identifies the order on $\Phi$ with the opposite order on $P(S)$.)

The assignment $(S, \Phi) \mapsto \Phi$ defines a functor $\for^\prime \to C$: a morphism $j : (S, \Phi) \to (T, \Psi)$ gives rise to an injective map of posets $\Phi \to j^{-1}(\Psi) \to \Psi$ which preserves independence. It is clear that this functor factors through $\for$. Conversely, we describe the functor $C \to \for$ as follows. Given a morphism $f : V \to W$ in $C$, we get a corresponding map from the set $S$ of maximal elements of $V$ to the set $R$ of maximal elements of $W$ sending $s \in S$ to \emph{any} $r \in R$ such that $r \geq f(s)$. The resulting map $S \to R$ is a morphism in $\for$ (but not in $\for^\prime$). The two functors that we have constructed give an inverse equivalence between $C$ and $\for$.
\end{proof}

\begin{rem}\label{rem : forests vs nests}
	The set of forests is in bijection with the set of nests in \cite{axelrodsinger} and \cite[Section 3]{boavidaspaces}, in the ``screen compactification model'' built out of the spaces  $\AS_n(X)$. Indeed, given a nest we may add to it all the one-point subsets and obtain a forest. Conversely, given a forest, we can remove all the one-point subsets and obtain a nest.
\end{rem}

\subsection{Forests with levels}\label{sec:forestswlevels}

Consider the category $\sfin$ of simplices of the nerve of the category of finite sets. Concretely, an object of $\sfin$ is a functor $\alpha\colon[k]\to\fin$ for some $k$, i.e.\ a sequence
\[ S_0\xrightarrow{f_0}S_1\xrightarrow{f_1}\ldots\xrightarrow{f_{k-1}}S_k\]
of composable maps of finite sets. A morphism from $\alpha\colon [k]\to\fin$ to $\beta\colon[l]\to\fin$ is a map $\delta\colon[k]\to[l]$ in $\Delta$ such that $\alpha= \beta\circ\delta$. Concretely, a morphism corresponding to an elementary degeneracy insert an identity, and a morphism corresponding to an elementary face is either a composition of two maps or it forgets the first or last map.

We use the characterization of $\for$ in lemma \ref{lem: forests} to describe a functor $$F : \Delta\fin \to \for \; .$$ An object $\alpha$ of $\Delta\fin$ determines an object $F(\alpha)$ of $\for$, as follows. As a set $F(\alpha)$ is the quotient of the disjoint union of the $S_i$ by the relation $x \sim f_i(x)$ if the preimage of $x$ under $f_i$ is $\{x\}$. As for the order relation on $F(\alpha)$, we take the order induced by declaring $x \leq y$ in the disjoint union of the $S_i$ if $x = g(y)$ where $g$ is some composition of maps $f_j$. 

For a morphism $\alpha \to \alpha^\prime$ in $\Delta\fin$ corresponding to a surjection $\delta : [k] \to [\ell]$, we define $F(\alpha) \to F(\alpha^\prime)$ to be the identity. For a morphism $\alpha \to \alpha^\prime$ corresponding to an injection $\delta : [k] \to [\ell]$, the induced map
\[
\coprod_{i \in [k]} S_i \to \coprod_{i \in [\ell]} S_i
\]
respects the order relation and passes to the quotient, defining a map $F(\alpha) \to F(\alpha^\prime)$. It is clear that $F$ respects compositions.

\section{Blow-ups}

\subsection{Complex and oriented real blow-ups}

We follow quite closely the treatment of blow-ups of \cite[Section 8.3]{bergstromhyperelliptic}. Let $\mathbf{P}^{n-1}_\C=(\C^n\sm0)/\C^{\times}$ and $S^{n-1}\cong(\R^n\sm0)/\R_{>0}$. 

\begin{defn}
	For a topological space $X$ and a continuous map $\sigma\colon X\to \C^n$, we define the \emph{complex blow-up} along $\sigma$ as the space
\[
	\blc_\sigma(X)\coloneqq\{(x,[w])\in X\times\mathbf{P}^{n-1}_\C : (\exists\alpha\in\C)\; \sigma(x)=\alpha w\}.
\]
\end{defn}

\begin{defn}
	For a topological space $X$ and a continuous map $\sigma\colon X\to \R^n$, we define the \emph{real oriented blow-up} along $\sigma$ as the space
\[
	\blr_\sigma(X)\coloneqq\{(x,[w])\in X\times S^{n-1}: (\exists\alpha\in\R_{\geq 0})\; \sigma(x)=\alpha w\}.
\]
\end{defn}

In either case there is a \emph{blow-down map} $bl_\sigma$ to $X$ given by $bl_\sigma(x,[w])=x$. This is an isomorphism away from the zero-set of $\sigma$, whereas over any zero of $\sigma$ the fiber is the projective space $\mathbf{P}^{n-1}_\C$, or the sphere $S^{n-1}$ respectively. The zero-set of $\sigma$ is called the \emph{blow-up locus}, and its inverse image under $bl_\sigma$ is the \emph{exceptional divisor}. 
  
If $Y\subseteq X$ we define the \emph{dominant transform} $\wt{Y}\subseteq\blc_\sigma(X)$ as the closure of $$bl_\sigma^{-1}(\{y\in Y: \sigma(y)\neq0\})$$ in case this is nonempty, i.e. $Y$ is not contained in $\sigma^{-1}(0)$; otherwise, define $\wt{Y}$ as $bl_\sigma^{-1}(Y)$. 

\begin{example}
	For $X=\C^n$ and $\sigma=Id\colon\C^n\to\C^n$ we have $\blc_{\sigma}(\C^n)=\blc_0(\C^n)$, the (complex projective) blow-up of $\C^n$ at the origin. More generally, for $X$ a complex algebraic variety and $\sigma:X\times\C^n\to\C$ given by $\sigma(x,u)=u$, the blow-up $\blc_{\sigma}( X\times\C^n)$ can be identified with
	\[
		\blc_{X\times0}(X\times\C^n)\cong X\times\blc_0(\C^n).
	\]
\end{example}
\begin{example}\label{exam:AK}
	For $X=\R^{m+n}$ and $\sigma=pr\colon\R^{m+n}\to\R^n$ the projection to the last $n$ coordinates, we have the zero-locus $\R^m\times0$. Then there is a map
	\[
		\blr_{\sigma}(\R^{m+n})
  =\{((x,y),[w])\in X\times S^{n-1}: (\exists\alpha\in\R_{>0})\; y=\alpha w\}
  \ra
  \R^m\times\R_{\geq0}\times S^{n-1}
	\]
	defined by $((x,y),[w])\mapsto (x,|y|,[w])$. This is clearly continuous and surjective, and it is injective as well: if $|y_1|=|y_2|$ and both $y_1$ and $y_2$ are positively proportional to a fixed $w$, then $y_1=y_2$. The blow-down map corresponds to $(x,r,[w])\mapsto (x,\frac{r}{|w|}w)$.
\end{example}

We can make similar definitions when $\sigma$ is, more generally, a section of a complex or real vector bundle $E$: we use the formulas above in local charts in which $E$ is trivial and glue those spaces together. In each case there is a canonical blow-down map to $X$, whose fiber over a non-zero point of $\sigma$ is homeomorphic to a point, whereas the fiber over a zero of $\sigma$ is homeomorphic to a copy of $\mathbf{P}_\C^{n-1}$, respectively $S^{n-1}$. Then the following is immediate.

\begin{prop}
	If $E\to X$ is a complex vector bundle and $0_E\colon X\to E$ is the zero section, then $\blc_{0_E}(X)$ is the projectivization of $E$. Similarly, if $E\to X$ is a real vector bundle, then $\blr_{0_E}(X)$ is the sphere bundle of $E$.
\end{prop}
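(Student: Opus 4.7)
The plan is to reduce to the local case by exploiting the local-chart definition of $\blc$ and $\blr$ for vector-bundle sections, and then to check that the gluing of local blow-ups agrees with the standard gluing of the projectivization (respectively sphere bundle) of $E$.

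First I would fix a trivialising open $U\subseteq X$ with $E|_U\cong U\times\C^n$. In this chart the zero section reads $x\mapsto (x,0)$, so the fibrewise map governing the blow-up is the constant zero map $\sigma\colon U\to\C^n$. Plugging this into the definition gives
\[
    \blc_{\sigma}(U) = \{(x,[w])\in U\times\mathbf{P}^{n-1}_\C \mid (\exists\alpha\in\C)\;0=\alpha w\} = U\times\mathbf{P}^{n-1}_\C,
\]
since the condition is satisfied for every $[w]$ (take $\alpha=0$). This matches the local description of the projectivization $\mathbf{P}(E)|_U$ on the nose. The real case is entirely analogous, yielding $\blr_{0_E}(U)=U\times S^{n-1}$, the restriction of the sphere bundle to $U$.

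Next I would compare the gluing data on overlaps. Let $U$ and $U'$ be trivialising opens with transition $g\colon U\cap U'\to GL_n(\C)$. For an arbitrary section $\sigma$, the local representatives are related by $\sigma_{U'}(x)=g(x)\sigma_U(x)$, and so the equation $\sigma_U(x)=\alpha w$ holds if and only if $\sigma_{U'}(x)=\alpha g(x)w$. This forces the natural identification of local blow-ups to send $(x,[w])_U$ to $(x,[g(x)w])_{U'}$, which is exactly the projectivised $GL_n$-transition defining $\mathbf{P}(E)$. The real version produces the analogous gluing for the sphere bundle $(E\sm 0_E)/\R_{>0}$. Consequently the local identifications patch together into a global homeomorphism.

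The only step that requires any vigilance is the verification of gluing compatibility above (in particular, that the identification really descends from the local definition by glueing rather than depending on choices); everything else is a direct unravelling of definitions, so I do not anticipate any serious obstacle.
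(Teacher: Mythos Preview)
Your proposal is correct and is exactly the unwinding of what the paper leaves implicit: the paper gives no proof beyond the sentence ``Then the following is immediate'', relying on the local-chart definition of $\blc_\sigma$ and $\blr_\sigma$ for bundle sections just introduced. Your local computation and transition-function check are precisely the details behind that word ``immediate''.
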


\begin{rem}
	For $\sigma\colon X\to\C^n$ that is not identically zero, we can view $\blc_\sigma(X)$ as the {dominant transform} of the graph of $\sigma$ under the blow-up of $X\times0\subset X\times\C^n$.
		Similarly, for a section $\sigma\colon X\to E$ of a bundle, the space $\blc_\sigma(X)$ is the dominant transform of the graph of $\sigma$ under the complex projective blow-up of $E$ along the zero section. And similarly in the real oriented case.
\end{rem}

\subsection{Iterated  real blow-ups}
The blow-up construction is functorial in the following way.

\begin{prop}\label{prop : cartesian square}
    Given a map $f\colon X\to Y$ of topological spaces and a section $\sigma\colon X\to E$ of a real vector bundle, consider the pullback bundle $f^*E$ and the section $f^*\sigma$, and form $\blr_{f^*\sigma}(Y)$. Then the following is a cartesian square:
\[
\begin{tikzcd}
	\blr_{f^*\sigma}(Y)\rar\dar & \blr_{\sigma}(X)\dar\\
	Y\rar{f} &X.
\end{tikzcd}
\]
\end{prop}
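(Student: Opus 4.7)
The plan is to reduce to the case of a trivial bundle, where the claim follows by comparing two explicit subspaces of a product. Note first that the pictured cartesian diagram forces $f : Y \to X$ (rather than the direction written in the statement), and that $f^*\sigma : Y \to f^*E$ denotes the induced section $y \mapsto (y,\sigma(f(y)))$ under the canonical identification $(f^*E)_y = E_{f(y)}$.

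\textbf{Local reduction.} Since the definition of $\blr$ proceeds via local trivializations of $E$, it is immediate that for any open $U \subseteq X$ one has a canonical homeomorphism $\blr_{\sigma|_U}(U) \cong \blr_\sigma(X) \times_X U$ over $U$. Choose an open cover $\{U_i\}$ of $X$ trivializing $E$; then $\{V_i = f^{-1}(U_i)\}$ is an open cover of $Y$ that trivializes $f^*E$. Both spaces in the proposed cartesian square restrict compatibly to the corresponding pieces over each $U_i$, so the assertion reduces to the case when $E$ is trivial.

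\textbf{Trivial case.} With $E = X \times \R^n$ and $\sigma : X \to \R^n$ a continuous map, one has $f^*\sigma = \sigma \circ f : Y \to \R^n$, and unpacking the definitions gives
\[
\blr_{f^*\sigma}(Y) = \{(y,[w]) \in Y \times S^{n-1} : \exists\,\alpha \in \R_{\geq 0},\ \sigma(f(y)) = \alpha w\},
\]
\[
Y \times_X \blr_\sigma(X) = \{(y,(x,[w])) \in Y \times X \times S^{n-1} : f(y) = x,\ \exists\,\alpha \in \R_{\geq 0},\ \sigma(x) = \alpha w\}.
\]
The assignment $(y,[w]) \mapsto (y,(f(y),[w]))$ is a continuous bijection with continuous inverse $(y,(x,[w])) \mapsto (y,[w])$; both arise as restrictions of standard product projections and are compatible with the maps to $Y$ and to $\blr_\sigma(X)$, so they provide the required homeomorphism.

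The only mild obstacle is verifying that these chart-level homeomorphisms agree on the overlaps $U_i \cap U_j$ when the trivializations of $E$ differ by a transition function. This is automatic, however, because the comparison map $(y,[w]) \mapsto (y,(f(y),[w]))$ depends only on the pair $(E,\sigma)$ and not on the trivialization used to write it down. Gluing then produces the global homeomorphism $\blr_{f^*\sigma}(Y) \cong Y \times_X \blr_\sigma(X)$, which is exactly the cartesian square in the statement.
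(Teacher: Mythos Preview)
Your proof is correct and follows essentially the same route as the paper's: reduce to the case of a trivial bundle by working locally on $X$, and then verify the claim by an explicit identification of the two subspaces of $Y\times X\times S^{n-1}$. The paper simply asserts that ``it can be checked locally on $X$ so we may assume that $E$ is a trivial bundle in which case this is a trivial computation,'' whereas you have actually carried out that computation and the gluing check; your observation that the map $f$ must go $Y\to X$ is also correct.
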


\begin{proof}
    This appears in \cite[Paragraph 8.3.5]{bergstromhyperelliptic}. It can be checked locally on $X$ so we may assume that $E$ is a trivial bundle in which case this is a trivial computation.
\end{proof}

Now let $X$ be a topological space, $\{E_i\to X\}_{1\leq i\leq n}$ a collection of real vector bundles and $\{\sigma_i\colon X\to E_i\}_{1\leq i\leq n}$ a collection of sections. We abusively denote by
\begin{equation}\label{eq:iterated-blow-up}
	\blr_{\sigma_n}\ldots\blr_{\sigma_1}(X)
\end{equation}
the iterated blow-up construction where at each stage we pullback the remaining vector bundles and sections on the already constructed real oriented blow-up.

\begin{prop}\label{prop : independant on the order}
	Up to homeomorphism, the resulting space \eqref{eq:iterated-blow-up} is independent on the order of the blow-ups.
\end{prop}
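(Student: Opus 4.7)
The plan is to exhibit the iterated blow-up as a fiber product of individual blow-ups over $X$, which will be manifestly independent of the order. The key tool is the cartesian square of Proposition~\ref{prop : cartesian square}.

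First I would treat the case $n=2$. Let $p\colon \blr_{\sigma_1}(X) \to X$ denote the blow-down map, and write $p^*\sigma_2$ for the pullback section of $p^*E_2$ on $\blr_{\sigma_1}(X)$. By definition, the second stage of the construction is $\blr_{p^*\sigma_2}(\blr_{\sigma_1}(X))$. Applying Proposition~\ref{prop : cartesian square} to the map $p$ with the bundle $E_2$ and section $\sigma_2$ yields a cartesian square
$$
\begin{tikzcd}
\blr_{p^*\sigma_2}(\blr_{\sigma_1}(X)) \rar \dar & \blr_{\sigma_2}(X) \dar \\
\blr_{\sigma_1}(X) \rar{p} & X,
\end{tikzcd}
$$
identifying $\blr_{\sigma_2}\blr_{\sigma_1}(X)$ with the fiber product $\blr_{\sigma_1}(X) \times_X \blr_{\sigma_2}(X)$. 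This description is visibly symmetric in the two indices, so the same argument with the roles of $1$ and $2$ swapped gives $\blr_{\sigma_1}\blr_{\sigma_2}(X) \cong \blr_{\sigma_1}(X) \times_X \blr_{\sigma_2}(X)$, producing the desired homeomorphism.

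For general $n$, I would proceed by induction. At the inductive step, apply Proposition~\ref{prop : cartesian square} to the composite blow-down map $\blr_{\sigma_{n-1}} \ldots \blr_{\sigma_1}(X) \to X$ together with the section $\sigma_n$ of $E_n$, and use associativity of fiber products together with the inductive hypothesis to obtain a homeomorphism
$$
\blr_{\sigma_n}\ldots\blr_{\sigma_1}(X) \;\cong\; \blr_{\sigma_1}(X) \times_X \blr_{\sigma_2}(X) \times_X \cdots \times_X \blr_{\sigma_n}(X),
$$
the right-hand side being manifestly invariant under reordering of the sections.

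The main obstacle -- mild as it is -- is bookkeeping: one has to check that at each stage, the section to which Proposition~\ref{prop : cartesian square} is applied is really the pullback through the iterated blow-down map of the original section $\sigma_k$ on $X$. This is an immediate consequence of unwinding the definition of the iterated construction in \eqref{eq:iterated-blow-up}, where by convention remaining bundles and sections are pulled back at each stage, together with the fact that pullbacks compose. No regularity or transversality hypothesis is needed because the cartesian square is purely topological and holds for an arbitrary map.
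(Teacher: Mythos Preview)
Your proposal is correct and follows essentially the same approach as the paper: both identify the iterated blow-up with a fiber product over $X$ via Proposition~\ref{prop : cartesian square}, from which order-independence is immediate. The paper's proof is terser---it treats $n=2$ exactly as you do and then dismisses the general case with ``follows by induction''---whereas you spell out the induction by explicitly exhibiting the $n$-fold fiber product, but the underlying idea is identical.
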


\begin{proof}
	Assume that we have vector bundles $E$ and $F$ and sections $\sigma\colon X\to E$ and $\tau\colon X\to F$. If we define $\blr_{\sigma,\tau}(X)$ by the following cartesian square
\[
\begin{tikzcd}
	\blr_{\sigma,\tau}(X)\rar\dar & \blr_{\sigma}(X)\dar{bl_\sigma}\\
	\blr_{\tau}(X)\rar{bl_\tau} & X,
\end{tikzcd}
\]
	then $\blr_{\sigma,\tau}(X)$ is clearly homeomorphic both to $\blr_{bl_\tau^*(\sigma)}\blr_{\tau}(X)$ and $\blr_{bl_\sigma^*(\tau)}\blr_{\sigma}(X)$, which we abusively denoted $\blr_\sigma\blr_\tau(X)$ and $\blr_\tau\blr_\sigma (X)$. This proves the case $n=2$, and the general case $n\geq2$ follows by induction. 
\end{proof}

\subsection{Blowing-up a submanifold}

Let $X$ be a smooth manifold and $Z\subset X$ be a codimension $c$ submanifold. If $Z$ is defined by the zero set of a smooth map $\sigma\colon X\to \R^c$, we define
\[\blr_Z(X)\coloneqq\blr_{\sigma}(X),\]
where we view $\sigma$ as a section of the trivial $\R^c$-bundle on $X$. In general, $Z$ is locally of this form and we define $\blr_Z(X)$ by glueing the local blowups. This is verified in detail in \cite{AK} (note that they define blow-ups slightly differently, but equivalently thanks to Example~\ref{exam:AK}).

\subsection{Blowing-up a subscheme}

There is also the algebro-geometric blow-up $\bl_Z(X)$ when $Z\subset X$ is a closed subscheme, see any book on schemes. This is related to the topological construction thanks to the following.

\begin{prop}
	Assume $X$ is a smooth complex scheme and $Z\subset X$ a smooth closed subscheme of codimension $c$ defined as the zero locus of a section $\sigma\colon \mathcal{O}_X\to\mathcal{E}$ with $\mathcal{E}$ a locally free sheaf of rank $c$. Then there is a homeomorphism
\[(\bl_ZX)(\C)\cong \blc_{\sigma}(X(\C)).\] 
\end{prop}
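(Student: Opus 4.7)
The strategy is to verify the claim Zariski-locally on $X$ and then glue. Pick a Zariski open $U \subseteq X$ over which $\mathcal{E}$ is trivial, and choose a basis $e_1, \ldots, e_c$ of $\mathcal{E}|_U$, so that $\sigma|_U$ corresponds to a tuple of regular functions $(f_1, \ldots, f_c)$ on $U$ cutting out $Z \cap U$. By the smoothness hypotheses on $X$ and on $Z$ of codimension $c$, the $f_i$ form a regular sequence, and the algebraic blow-up admits the classical explicit description as the closed subscheme of $U \times \mathbf{P}^{c-1}_\C$:
\[
\bl_{Z \cap U}(U) = \{(x, [w_1 : \ldots : w_c]) \in U \times \mathbf{P}^{c-1}_\C : f_i(x) w_j = f_j(x) w_i \text{ for all } i, j\}.
\]

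Next, I would pass to $\C$-points. The $\C$-points of the above subscheme are exactly the pairs $(x, [w])$ such that $(f_1(x), \ldots, f_c(x))$ is a complex scalar multiple of $w$, which is the defining condition of $\blc_{\sigma|_U}(U(\C))$ as a subspace of $U(\C) \times \mathbf{P}^{c-1}_\C$. The analytic topology on the $\C$-points of a closed subscheme of $U \times \mathbf{P}^{c-1}_\C$ coincides with the subspace topology from $U(\C) \times \mathbf{P}^{c-1}_\C$, by the standard compatibility of analytification with closed immersions. This yields a local homeomorphism $(\bl_{Z \cap U}(U))(\C) \cong \blc_{\sigma|_U}(U(\C))$.

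Finally, to glue the local homeomorphisms: on an overlap $U \cap V$ the two trivializations of $\mathcal{E}$ differ by a transition matrix $g \in GL_c(\Gamma(U \cap V, \mathcal{O}_X))$. Both constructions are covariantly functorial under such a change of basis --- replacing $\sigma$ by $g \cdot \sigma$ corresponds on each side to composing with the projective-linear automorphism of $\mathbf{P}^{c-1}_\C$ induced by $g$ --- so the local homeomorphisms agree on overlaps and assemble into the claimed global homeomorphism. The main non-routine point is the local step, which invokes smoothness of $Z$ in $X$ (via the regular-sequence condition) to identify $\bl_{Z \cap U}(U)$ with the evident incidence subscheme of $U \times \mathbf{P}^{c-1}_\C$; once that is in hand, the rest of the argument is a mechanical comparison.
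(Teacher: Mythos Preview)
Your argument is correct and is the standard one. The paper itself states this proposition without proof, so there is nothing to compare against; your local computation via the incidence description of the blow-up of a regular sequence, followed by gluing via the $GL_c$ transition functions, is exactly the expected verification.
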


We shall need the following important proposition.

\begin{prop}\label{prop : real blow-up is real-complex blow-up}
    Let $X$ be a smooth complex scheme and $Z\to X$ be a smooth codimension $c$ subscheme. Then there is a homeomorphism over $X(\C)$:
    \[\blr_{Z(\C)}{X(\C)}\cong\blr_{\wt{Z}(\C)}[(\bl_ZX)(\C)],\]
    where $\wt{Z}\subset\bl_Z(X)$ denotes the dominant transform of $Z$, which in this case is just the exceptional divisor.
\end{prop}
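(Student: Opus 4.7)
The plan is to reduce the statement to a universal local calculation and then compute both sides explicitly. Both sides of the claimed homeomorphism are local on $X$: the left-hand side by the very definition of the real oriented blow-up of a submanifold via gluing of local blow-ups of sections, and the right-hand side because the algebro-geometric blow-up $\bl_Z X \to X$ is Zariski-local on $X$ and the real oriented blow-up of the exceptional divisor is local on $(\bl_Z X)(\C)$. Shrinking $X$, I may therefore assume that $Z$ is cut out by a regular section $\sigma \colon X \to \C^c$ of the trivial rank-$c$ bundle, so that $\sigma$ is a smooth morphism in a neighborhood of $Z$.

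Then, by Proposition~\ref{prop : cartesian square} together with the fact that the algebro-geometric blow-up of a smooth subscheme commutes with smooth base change (and that the exceptional divisor is preserved under such pullbacks), both sides of the desired homeomorphism are obtained as pullbacks along $\sigma(\C) \colon X(\C) \to \C^c$ from the corresponding constructions for the pair $(\C^c, \{0\})$. This reduces the proposition to establishing a canonical homeomorphism
\[
\blr_{\{0\}}(\C^c) \;\cong\; \blr_{\mathbf{P}^{c-1}_\C}\bigl[(\bl_{\{0\}}\C^c)(\C)\bigr]
\]
of spaces over $\C^c$.

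In this universal case I would identify both sides with $\R_{\geq 0} \times S^{2c-1}$, structured over $\C^c$ by $(r, u) \mapsto r\cdot u$. On the left this is Example~\ref{exam:AK}. For the right-hand side, $(\bl_{\{0\}}\C^c)(\C)$ is the total space of the tautological line bundle $\mathcal{O}(-1) \to \mathbf{P}^{c-1}_\C$, with the exceptional divisor being the zero section. Locally trivialising $\mathcal{O}(-1)$ as $U \times \C$ and applying Example~\ref{exam:AK} on each chart, the real oriented blow-up becomes $U \times \R_{\geq 0} \times S^1$; these local pieces glue to a canonical homeomorphism with $\R_{\geq 0} \times S(\mathcal{O}(-1)_\R)$, where $S(\mathcal{O}(-1)_\R)$ is the unit-circle bundle of $\mathcal{O}(-1)$ viewed as a real rank-$2$ bundle over $\mathbf{P}^{c-1}_\C$. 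Finally, sending $([v], w)$ to $w$ identifies $S(\mathcal{O}(-1)_\R)$ with $S^{2c-1} \subset \C^c$, and the resulting homeomorphism with $\R_{\geq 0} \times S^{2c-1}$ carries the correct map to $\C^c$.

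The main obstacle I anticipate is not any individual computation but the bookkeeping needed to make the reduction genuinely functorial: the local homeomorphisms obtained from different choices of trivialising section $\sigma$ must glue to a canonical homeomorphism over $X(\C)$. Since both sides are built from intrinsic constructions — sphere bundles, projectivisations, and tautological line bundles — naturality should ultimately be automatic, but verifying the compatibility carefully is the heart of the argument.
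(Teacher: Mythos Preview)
Your proposal is correct and follows essentially the same approach as the paper: reduce locally to the case where $Z$ is cut out by a map $\sigma\colon X\to\mathbf{A}^c$, then compute. The paper's proof is in fact little more than the sentence ``If $Z$ is given by the vanishing locus of a map $\sigma\colon X\to\mathbf{A}^c$, then this is a straightforward computation; in general, $X$ will be covered by affine subschemes on which this is the case.'' Your further reduction to the universal pair $(\C^c,\{0\})$ via Proposition~\ref{prop : cartesian square} and base change of the algebraic blow-up is a clean way to organise that computation, and your explicit identification of both sides with $\R_{\geq 0}\times S^{2c-1}$ is exactly the content the paper leaves implicit.
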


\begin{proof}
	If $Z$ is given by the vanishing locus of a map $\sigma\colon X\to\mathbf{A}^c$, then this  is  a straightforward computation. In general, $X$ will be covered by affine subschemes on which this is the case.
\end{proof}

\section{Log schemes}
\subsection{Log schemes and their blow-ups}
\begin{defn}
	A \emph{log structure} on a scheme $X$ is a finite collection $\{\sigma_i\colon\mathcal{O}_X\to\mathcal{L}_i\}_{i\in I}$ of line bundles on $X$ each equipped with a section. The pair $\logX=(X,\{\sigma_i\})$ is called a \emph{log scheme}. Given two log structures $\{\sigma_i:\mathcal{O}_X\to\mathcal{L}_i\}$ and $\{\tau_j:\mathcal{O}_X\to \mathcal{M}_j\}$ on the same scheme $X$, a morphism from $\{\sigma_i\}$ to $\{\tau_j\}$ is the data of natural numbers $e_{ij}$ and isomorphisms
	\[\mathcal{L}_i\cong\bigotimes_{j}\mathcal{M}_j^{\otimes e_{ij}}\]
	identifying the section $\sigma_i$ with $\bigotimes_j\tau_j^{\otimes e_{ij}}$.
\end{defn}

\begin{defn}
	Given a morphism of schemes $f\colon Z\to X$ and a log structure $\{\sigma_i\}_{i\in I}$ on $X$, there is an \emph{induced log structure} on $Z$ given by $\{f^*(\sigma_i)\colon\mathcal{O}_Z\to f^*\mathcal{L}_i\}_{i\in I}$.

A morphism of log-schemes $\logX=(X,\{\sigma_i\})\to\logY=(Y,\{\tau_j\})$ is a map of schemes $f:X\to Y$ together with a morphism of log-structures  from the log-structure of $X$ to the log-structure pulled-back along $f$.
\end{defn}

\begin{example}\label{exam : main example of log scheme}
	For $Z\subset X$ a smooth codimension $1$ subscheme, we have a corresponding line bundle $\mathcal{O}_X(Z)$ with a canonical section $\mathcal{O}_X\to \mathcal{O}_X(Z)$. This gives a log-structure on $X$ that we denote by $\logX$. There is also a log structure on $Z$ given by the normal bundle of the inclusion $Z\to X$ and the zero section. This log-structure denoted $\logZ$ is simply the induced log structure along the inclusion $Z\subset X$. It follows that $\logZ\to \logX$ is a morphism of log schemes.
	
	More generally, if we have a finite collection $Z_i$ of codimension $1$ subschemes (for example, a normal crossing divisor), we have the corresponding log scheme $\logX=(X,\{\sigma_i\})$.
\end{example}

There is a natural blow-up construction in the category of log schemes.
\begin{defn}\label{def: log blow-up}
	Let $\logX=(X,\{\sigma_i\})$ be a smooth log scheme and $Z$ be a smooth closed subscheme of positive codimension. We define the \emph{logarithmic blow-up of $\logX$ along $Z$} as the log scheme
\[\bll_Z(\logX)=\big(\bl_Z(X),\, \{bl_Z^*\sigma_i\}\sqcup \{\sigma'\}\big),\]
	where $bl_Z\colon\bl_Z(X)\to X$ is the blow-down map and $\sigma'\colon\mathcal{O}_X\to\mathcal{O}_X(\wt{Z})$ is the line bundle corresponding to the divisor $\wt{Z}$. 
\end{defn}

\subsection{The Kato--Nakayama realization}

\begin{defn}\label{def:KN}
	For a log scheme $\logX=(X,\{\sigma_i\colon\mathcal{O}_X\to\mathcal{L}_i\}_{1\leq i\leq n})$ over $\C$, the \emph{Kato--Nakayama space} is the topological space defined as the iterated real blow-up of its complex points:
\[\kn(\logX)\coloneqq\blr_{\sigma_n}\blr_{\sigma_{n-1}}\ldots\blr_{\sigma_1}[X(\C)].\]
\end{defn}

\begin{rem}
	Note that $\mathcal{L}_i$ are complex line bundles on $X$, that are in the definition of $\kn(\logX)$ viewed as real plane bundles on $X(\C)$. The construction of $\kn(\logX)$ does not depend on the chosen order of blow-ups thanks to Proposition \ref{prop : independant on the order}.
\end{rem}

\begin{example}\label{exam : main example of log scheme - KN}
	As in Example~\ref{exam : main example of log scheme} let $Z_i$, $1\leq i\leq n$, be codimension $1$ subschemes, giving the log scheme $\logX=(X,\{\sigma_i\colon
    \mathcal{O}_X\to\mathcal{O}_X(Z_i)\}_{1\leq i\leq n})$. If $n=1$, then $\kn(\logX)$ is simply the oriented real blow-up of $Z_1$ and is a manifold with boundary. More generally, if $\bigcup_iZ_i$ forms a normal crossing divisor, then $\kn(\logX)$ is a smooth manifold with corners.
\end{example}

\begin{prop}\label{prop : kn of log blow-up}
	If $\logX=(X,\{\sigma_i\colon
 \mathcal{O}_X\to\mathcal{L}_i\}_{1\leq i\leq n})$ is a smooth log scheme and $Z$ is a smooth closed subscheme of positive codimension, then there is a homeomorphism
\[\kn(\bll_Z(\logX))\cong \blr_{\kn(\logZ)}(\kn(\logX)).\]
\end{prop}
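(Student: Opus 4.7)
The plan is to unfold both sides and reduce to Proposition~\ref{prop : real blow-up is real-complex blow-up} (the identification of the real blow-up of $Z(\C) \subset X(\C)$ with a real blow-up over the algebraic blow-up), using Propositions~\ref{prop : cartesian square} and~\ref{prop : independant on the order} to shuffle the various blow-ups past each other.

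First, iterating Proposition~\ref{prop : cartesian square} along the closed immersion $Z \hookrightarrow X$ (and noting that the log structure on $\logZ$ is by definition the pullback of that on $\logX$) gives a cartesian square
\[
\begin{tikzcd}
\kn(\logZ) \rar \dar & \kn(\logX) \dar \\
Z(\C) \rar & X(\C).
\end{tikzcd}
\]
Applying Proposition~\ref{prop : cartesian square} once more, in its submanifold form (justified locally as in the definition of $\blr_Z(X)$ for submanifolds, since $Z$ is smooth and can be cut out in affine charts by a section of a trivial rank-$c$ bundle), one obtains
\[
\blr_{\kn(\logZ)}[\kn(\logX)] \;\cong\; \kn(\logX)\times_{X(\C)}\blr_{Z(\C)}[X(\C)].
\]
Then Proposition~\ref{prop : independant on the order} lets us reorder: this fibered product is homeomorphic to
\[
\blr_{\sigma_n}\cdots\blr_{\sigma_1}\,\blr_{Z(\C)}[X(\C)],
\]
i.e., we first blow up $Z(\C)$ and only then perform the real blow-ups along the $\sigma_i$.

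The key step is to substitute Proposition~\ref{prop : real blow-up is real-complex blow-up}, which identifies $\blr_{Z(\C)}[X(\C)]$ with $\blr_{\wt{Z}(\C)}[(\bl_Z X)(\C)]$. Since $\wt{Z} \subset \bl_Z X$ is a smooth codimension-one subscheme (the exceptional divisor), it is cut out by the canonical section $\sigma' \colon \mathcal{O} \to \mathcal{O}(\wt{Z})$, so $\blr_{\wt{Z}(\C)}[(\bl_Z X)(\C)] = \blr_{\sigma'}[(\bl_Z X)(\C)]$. Reordering once more by Proposition~\ref{prop : independant on the order}, and noting that pulling back $\sigma_i$ along $bl_Z$ yields $bl_Z^*\sigma_i$, we arrive at
\[
\blr_{\sigma'}\,\blr_{bl_Z^*\sigma_n}\cdots\blr_{bl_Z^*\sigma_1}[(\bl_Z X)(\C)],
\]
which is precisely $\kn(\bll_Z(\logX))$ by Definitions~\ref{def: log blow-up} and~\ref{def:KN}.

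The main point requiring care is the submanifold version of Proposition~\ref{prop : cartesian square}: one needs that blowing up $\kn(\logX)$ along the submanifold $\kn(\logZ)$ is really the base change of blowing up $X(\C)$ along $Z(\C)$. This is a local verification, reducing on affine charts of $X$ to the case where $Z$ is the zero locus of a section of a trivial bundle, where the statement is exactly Proposition~\ref{prop : cartesian square}; the chart-level blow-ups glue together because both sides are built by the same recipe. Everything else is a formal juggling of the cartesian and order-independence statements around the single geometric input of Proposition~\ref{prop : real blow-up is real-complex blow-up}.
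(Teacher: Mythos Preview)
Your proof is correct and follows essentially the same route as the paper's: both arguments reduce locally, invoke Proposition~\ref{prop : real blow-up is real-complex blow-up} as the one geometric input, and use Proposition~\ref{prop : independant on the order} to shuffle the $\sigma_i$-blow-ups past the blow-up along $Z$. The only cosmetic differences are that the paper runs the chain of homeomorphisms in the opposite direction (starting from $\kn(\bll_Z(\logX))$) and passes to an affine chart with $Z$ a complete intersection at the outset, whereas you first isolate the cartesian square identifying $\kn(\logZ)$ as the preimage of $Z(\C)$ and only localize when justifying the submanifold form of Proposition~\ref{prop : cartesian square}; the paper's implicit use of that same cartesian square is exactly your explicit step, and is later recorded separately as Proposition~\ref{prop : KN of a pullback}.
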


\begin{proof}
	Working locally, we may assume that $X$ is a smooth affine scheme and $Z$ is a complete intersection, i.e.\ $Z=\sigma^{-1}(0)$ for $\sigma\colon X\to\mathbf{A}^c$ an algebraic map and $c=\on{codim}(Z)$.
	On one hand, by Definition~\ref{def: log blow-up} we have $\bll_Z(\logX)=\bl_Z(X)$ with the log structure $\{bl_Z^*\sigma_i\}\cup\{\sigma'\}$, so by Definition~\ref{def:KN} we have
\[\kn(\bll_Z(\logX))=\blr_{\sigma_n}\ldots\blr_{\sigma_1}\blr_{\sigma'}[(\bl_ZX)(\C))].\]
	By Proposition~\ref{prop : real blow-up is real-complex blow-up} we can omit the complex blow-up, so the last space is homeomorphic to 
\[\blr_{\sigma_n}\ldots\blr_{\sigma_1}\blr_{\sigma}[X(\C)].\]
	By Proposition~\ref{prop : independant on the order} we can change the order of the blow-ups, so this is homeomorphic to
\[\blr_{\sigma}\blr_{\sigma_n}\ldots\blr_{\sigma_1}[X(\C)].\]
	Since by definition $\kn(\logX)=\blr_{\sigma_n}\blr_{\sigma_{n-1}}\ldots\blr_{\sigma_1}[X(\C)]$, the last space is immediately identified with $\blr_{\kn(\logZ)}(\kn(\logX))$.
\end{proof}

We shall also need the following proposition.

\begin{prop}\label{prop : KN of a pullback}
    Let $\logX=(X,\{\sigma_i\})$ be a log scheme and $f\colon Z\to X$ a map of smooth schemes. Let $\logZ=(Z,\{f^*\sigma_i\})$ be the induced log scheme. Then there is a cartesian square of topological spaces
\[
\begin{tikzcd}
	\kn(\logZ)\rar\dar & \kn(\logX)\dar\\
	Z\rar{f} & X.
\end{tikzcd}
\]
\end{prop}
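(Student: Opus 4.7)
The plan is to argue by induction on the number $n$ of line bundles in the log structure of $\logX = (X, \{\sigma_1, \ldots, \sigma_n\})$, using \cref{prop : cartesian square} (cartesianness of the real blow-up along a pulled-back section) as the fundamental building block at each step. The base case $n = 0$ is trivial: both $\kn(\logX)$ and $\kn(\logZ)$ reduce to $X(\C)$ and $Z(\C)$ respectively, and the bottom map is the identity on the square.

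For the inductive step, set $\logX' = (X, \{\sigma_1, \ldots, \sigma_{n-1}\})$ and $\logZ' = (Z, \{f^*\sigma_1, \ldots, f^*\sigma_{n-1}\})$. By the inductive hypothesis we have a cartesian square
\[
\begin{tikzcd}
\kn(\logZ') \rar \dar & \kn(\logX') \dar \\
Z(\C) \rar{f} & X(\C).
\end{tikzcd}
\]
By definition of the Kato--Nakayama space, $\kn(\logX) = \blr_{\tau_n}(\kn(\logX'))$ where $\tau_n$ is the pullback of $\sigma_n$ along the composite blow-down $\kn(\logX') \to X(\C)$, and similarly $\kn(\logZ) = \blr_{\tau_n'}(\kn(\logZ'))$ where $\tau_n'$ is the pullback of $f^*\sigma_n$ along $\kn(\logZ') \to Z(\C)$. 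The induced log structure on $Z$ uses precisely $f^*\sigma_n$, so by functoriality of pullback, $\tau_n'$ coincides with the pullback of $\tau_n$ along the map $\kn(\logZ') \to \kn(\logX')$ provided by the inductive hypothesis.

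Applying \cref{prop : cartesian square} to this latter map and to the section $\tau_n$, we obtain a cartesian square
\[
\begin{tikzcd}
\kn(\logZ) = \blr_{\tau_n'}(\kn(\logZ')) \rar \dar & \blr_{\tau_n}(\kn(\logX')) = \kn(\logX) \dar \\
\kn(\logZ') \rar & \kn(\logX').
\end{tikzcd}
\]
Pasting this square on top of the inductive cartesian square and using that a vertical composition of cartesian squares is cartesian yields the desired square with $Z(\C)$ and $X(\C)$ on the bottom row.

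The only subtle point, which I expect to be the main thing requiring care rather than difficulty, is verifying the identification of the two candidate sections $\tau_n'$ (the pullback of $f^*\sigma_n$ along the iterated real blow-up $\kn(\logZ') \to Z(\C)$, and the pullback of $\tau_n$ along $\kn(\logZ') \to \kn(\logX')$). This amounts to checking that the two routes around the inductive cartesian square yield the same pullback of the line bundle $\mathcal{L}_n$ with its section, which holds by the universal property of the pullback.
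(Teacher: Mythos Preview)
Your proposal is correct and follows exactly the approach indicated in the paper: induction on the number of line bundles, with \cref{prop : cartesian square} supplying the cartesian square at each stage and the pasting lemma for cartesian squares assembling the result. The paper's own proof is just a one-line sketch of this same argument, so you have simply spelled out the details.
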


\begin{proof}
    This can be proved by induction on the number of line bundles in the log-structure. The base case and the induction step are proved using Proposition~\ref{prop : cartesian square}.
\end{proof}


\section{Compactifications of configuration spaces}

\subsection{Stratified spaces}

In this paper, it will be convenient to use the following definition of stratified spaces.

\begin{defn}
Let $P$ be a poset. A $P$-stratified space is a functor $X:P\to\cat{Top}$ such that for each map $p\leq q$ in $P$, the induced map
\[X(p)\to X(q)\]
is a closed inclusion.
\end{defn}

\begin{rem}
If $X$ is a stratified space in the usual sense, we can take $P$ to be the poset of strata and obtain a functor as above sending $p$ to the closure of the corresponding stratum. We will write $X(p)^{\circ}$ for the \emph{open} stratum indexed by $p$, so that
\[X(p)=\overline{X(p)^{\circ}}\]
and
\[X(p)^{\circ}=X(p)-\bigcup_{q<p}X(q)\] 
\end{rem}

If our poset $P$ has a maximal element $\infty$, then all the values of a $P$-stratified space $X$ are closed subspaces of $X(\infty)$. In this situation, we shall say that $X$ is a $P$-stratification of $X(\infty)$.

\begin{defn}\label{defi : pullback stratification}
Let $P$ be a poset with a maximal element $\infty$ and $X:P\to\cat{Top}$ be a $P$-stratified space. Let $f:Y\to X(\infty)$ be a continous map. We define the pullback stratification on $Y$ along $f$ to be the functor
$P\to\cat{Top}$
sending $p$ to $f^{-1}(X(p))\subset Y$.
\end{defn}

\subsection{The Axelrod--Singer compactification}
\label{subsec:AS}

Let $X$ be a \emph{smooth manifold without boundary} and $n$ a positive integer. The ordered configuration space $\Conf_n(X)$ of $n$ points in $X$ is the interior of a smooth manifold with corners $\AS_n(X)$ constructed in \cite[Section 5]{axelrodsinger} and called the Axelrod--Singer compactification. (Note, however, that despite the name $\AS_n(X)$ is only compact if $X$ itself is compact.) This construction has the following properties.
\begin{enumerate}
	\item $\AS_n(X)$ is a stratified space whose strata are indexed by $\forset(n)$, the poset of forests on $\{1,\ldots n\}$. We denote by $\AS_{\Phi}(X)$ the closure of the stratum indexed by a forest $\Phi$.
	\item The stratum corresponding to the unique forest with $n$ trees is the interior of $\AS_n(X)$ and is diffeomorphic $\Conf_n(X)$.
	\item The codimension of $\AS_{\Phi}(X)^\circ$ is the number of elements of $\Phi$ that have strictly more than one element.
\end{enumerate}

\subsection{The Fulton MacPherson compactification}
\label{subsec:FM}
	Let $X$ be a \emph{smooth algebraic variety} over a field $\K$ and $n$ a positive integer. By an algebraic variety, we mean an integral separated scheme over $\K$. The ordered configuration space $\Conf_n(X)$ of $n$ points in $X$ has an analogous compactification $\FM_n(X)$, a predecessor to the Axelrod-Singer compactification due to Fulton and MacPherson~\cite{fultonmacpherson}, that has the following properties.
\begin{enumerate}
	\item $\FM_n(X)$ is stratified by subvarieties $\FM^{\circ}_\Phi(X)$ indexed by elements $\Phi \in \for(n)$, the poset of forests on $\{1,\ldots, n\}$.
	\item The subvariety corresponding to the unique forest with $n$ trees is the complement of a normal crossing divisor and is isomorphic to $\Conf_n(X)$.
	\item The components of this divisor are indexed by forests with only one element of cardinality $\geq 2$.
	\item If $\Phi$ and $\Psi$ are two forests on $\{1,\ldots,n\}$ such that $\Phi\cup\Psi$ is still a forest, then
	\[\FM_{\Phi}(X)\cap\FM_{\Psi}(X)=\FM_{\Phi\cup\Psi}(X) \; .\]
\end{enumerate}

\subsection{Forgetting points}\label{subsection: forgetting points}

Let $i\colon S\to T$ be an injection between finite sets. Precomposition with this map induces a projection map $p\colon X^T\to X^S$ given by
\[(x_t)_{t\in T}\mapsto(x_{i(s)})_{s\in S}.\]
This map restricts to a map
\[\Conf_T(X)\to \Conf_S(X)\]
We claim that this map can be extended to the Axelrod--Singer compactification if $X$ is a smooth manifold and to the Fulton--MacPherson compactification if $X$ is a smooth algebraic variety. We explain this for the latter case, the former being similar. 

For a finite set $R$, the variety $\FM_R(X)$ is obtained by blowing up the diagonals $\Delta_U$ for $U\subset R$ with $|U|\geq 2$ in order of increasing dimension. Then we define the map
\[p\colon\FM_T(X)\to \FM_S(X)\]
as the composite of two maps
\[\FM_T(X)\to \FM_{T,S}(X)\to \FM_S(X)\]
where $\FM_{T,S}(X)$ is by definition the iterated blow-up of diagonals of $X^T$ indexed by subsets of $T$ of the form $i(U)$ for $U$ a subset of $S$ of cardinality at least $2$. The map $\FM_T(X)\to \FM_{T,S}(X)$ is then simply a blow-down map (we blow up more things in the source than in the target) while the second map $\FM_{T,S}(X)\to \FM_S(X)$ is induced by functoriality of blow-ups.

The map $p\colon\FM_T(X)\to\FM_S(X)$ is compatible with the stratification. Let us explain precisely what this means. The source is stratified by the poset $\for(T)\op$ while the target is stratified by $\for(S)\op$. The injective map $S\to T$ induces a map of posets $\for(T)\to\for(S)$ so we may view both schemes as stratified by $\for(T)$ and it is straightforward to verify that the map $p$ extend to a natural transformation of functors $\for(T)\op\to\cat{Var}_{\K}$.

\subsection{The configuration category}

\begin{prop}
    For a smooth manifold $X$ without boundary, there is a functor $\for\op\to\cat{Top}$ whose value on a forest $(S,\Phi)$ is the closed stratum $\AS_{\Phi}(X)\subset \AS_S(X)$.
\end{prop}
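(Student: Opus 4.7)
The plan is to construct the functor by restricting, to the closed strata, the forgetful maps described for Fulton--MacPherson compactifications in Section~\ref{subsection: forgetting points} and transposing them to the Axelrod--Singer setting (the constructions are parallel, cf.\ \cite{axelrodsinger}). Recall that a morphism $(S,\Phi)\to(T,\Psi)$ in $\for$ is represented by an injection $j\colon S\hra T$ with $\Phi\subset j^{-1}\Psi$, and two such injections are identified if they induce the same pullback forest on $S$.

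\textbf{Step 1 (define the map from a representative).} For each injection $j\colon S\hra T$, the analog of \S\ref{subsection: forgetting points} for AS gives a continuous forgetful map $p_j\colon \AS_T(X)\to \AS_S(X)$, factored through an intermediate iterated blow-up $\AS_{T,S}(X)$ of only those diagonals of $X^T$ indexed by subsets $j(U)$ with $U\subset S$, $|U|\geq 2$. By the stratified compatibility (again analogous to the FM case), $p_j$ restricts to $\AS_\Psi(X)\to \AS_{j^{-1}\Psi}(X)$ for every $\Psi\in\forset(T)$. Since $\Phi\subset j^{-1}\Psi$ gives the closed inclusion $\AS_{j^{-1}\Psi}(X)\hra \AS_\Phi(X)$ (by property~(4) of Section~\ref{subsec:AS}, together with the fact that adding elements to a forest only refines the stratum), composing yields a continuous map
\[
	F_j\colon \AS_\Psi(X)\ra \AS_\Phi(X).
\]

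\textbf{Step 2 (independence of the chosen representative).} This is the key technical point. Suppose $i,j\colon S\hra T$ are two injections with $i^{-1}\Psi=j^{-1}\Psi$; I want $F_i=F_j$. Both maps are continuous, and $\AS_\Psi(X)$ is the closure of its open stratum $\AS_\Psi^\circ(X)$, so it suffices to check agreement on $\AS_\Psi^\circ(X)$. On this open stratum, a point records the ``infinitesimal configuration'' prescribed by~$\Psi$: positions of all $t\in T$ in $X$, which collapse exactly according to $\Psi$, together with relative position data (tangential sphere directions) at each collision node of $\Psi$. The assumption $i^{-1}\Psi = j^{-1}\Psi$ means that for each $U\in \Psi$, the sets $i^{-1}(U)$ and $j^{-1}(U)$ coincide, i.e.\ $i$ and $j$ differ only by a permutation of points inside each collision class of~$\Psi$. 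But the image under $p_j$ is entirely determined by the positions (which agree for points in the same collision class) and by the relative directions at those collision nodes of $j^{-1}\Psi$ which come from collision nodes of $\Psi$; these are independent of which representatives $i$ or $j$ are chosen inside a class. Hence $F_i=F_j$ on $\AS_\Psi^\circ(X)$, and by density on all of $\AS_\Psi(X)$.

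\textbf{Step 3 (functoriality).} Given composable morphisms $(S,\Phi)\to(T,\Psi)\to(U,\Xi)$ in $\for$ represented by $j\colon S\hra T$ and $k\colon T\hra U$, one has $p_{k\circ j}=p_j\circ p_k$ as maps $\AS_U(X)\to \AS_S(X)$ (this follows directly from the construction of the forgetful maps as compositions of blow-down and partial forgetful maps, using functoriality of iterated blow-ups). Restricting to the closed stratum $\AS_\Xi(X)$ and post-composing with the appropriate closed inclusions gives $F_{k\circ j}=F_j\circ F_k$. Identities are trivially preserved. Thus the assignment $(S,\Phi)\mapsto \AS_\Phi(X)$, together with the maps $F_j$, defines the required functor $\for\op\to \cat{Top}$.

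The main obstacle is Step~2: it hinges on an explicit understanding of the open strata of $\AS_T(X)$ in terms of collision positions and relative tangential directions. Once that picture is in place, the independence of $F_j$ on the chosen representative is essentially a bookkeeping check, but it is the step that justifies the passage from $\for'$ to the quotient category $\for$.
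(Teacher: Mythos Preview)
Your approach is essentially the paper's: build the functor on $\for'$ via the forgetting maps and stratum inclusions, then check it descends to $\for$. The paper factors a general morphism into the two generating types (same underlying set with a forest refinement, and an injection with pulled-back forest) and then simply asserts that ``the factorization to $\for\op$ is trivially checked''; you instead treat an arbitrary morphism at once and try to justify the descent by a density argument on open strata. That is a perfectly reasonable refinement of the paper's terse sentence.

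There is, however, a genuine gap in Step~2. Your key claim --- that $i^{-1}\Psi=j^{-1}\Psi$ forces $i^{-1}(U)=j^{-1}(U)$ for every $U\in\Psi$, so that $i$ and $j$ differ only within collision classes --- is false as stated. Take $S=\{1\}$, $T=\{a,b\}$, $\Psi=\{\{a\},\{b\}\}$ the minimal forest, and $i(1)=a$, $j(1)=b$. Then $i^{-1}\Psi=j^{-1}\Psi=\{\{1\}\}$, yet $i^{-1}(\{a\})=\{1\}\neq\varnothing=j^{-1}(\{a\})$. Worse, in this example $\AS_\Psi(X)=\AS_T(X)$ and the two forgetting maps $p_i,p_j\colon\AS_T(X)\to X$ are the two distinct projections, so $F_i\neq F_j$. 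Your density argument cannot repair this: on the open stratum $\Conf_2(X)$ the points $a$ and $b$ are at different locations and the two projections visibly disagree.

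What this really exposes is that the equivalence relation in Definition~\ref{defi : category of forests}, read literally, is too coarse for the proposition to hold; the intended quotient is the one compatible with Lemma~\ref{lem: forests}, where $i\sim j$ means they induce the \emph{same poset map} $\Phi\to\Psi$ (equivalently, for each $s\in S$ the minimal element of $\Psi$ containing $i(s)$ equals that containing $j(s)$). Under that reading your density argument goes through: on $\AS_\Psi^\circ(X)$ the position and screen data retained by $p_i$ depend only on which vertices of $\Psi$ lie above each $i(s)$, and these coincide for $i$ and $j$. So your strategy is sound, but Step~2 needs to be argued against the correct (stronger) identification rather than the set-level equality $i^{-1}\Psi=j^{-1}\Psi$.
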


\begin{proof}
    First, we show that the assignment $(S,\Phi)\to\AS_{\Phi}(X)$ can be promoted to a functor $(\for')\op\to \cat{Top}$. By definition, $\for'$ is generated by maps of the form $\id_S\colon(S,\Phi)\to (S,\Psi)$ with $\Phi\subset \Psi$ and maps of the form $f\colon(S,\Phi)\to (T,f^{-1}\Phi)$ with $f\colon T\hra S$ an injection. To a map of the first kind we assign the inclusion of strata
        \[\AS_\Psi(X)\hra\AS_{\Phi}(X)\]
    and to a map of the second kind, we assign the map
        \[\AS_{\Phi}(X)\ra \AS_{f^{-1}\Phi}(X)\]
    induced by forgetting points and changing labels according to the map $f$. This defines a functor $(\for')\op\to\cat{Top}$. Moreover, the factorization to $\for\op$ is trivially checked.
\end{proof}

If we precompose the functor from the proposition with the functor
    \[\Delta\fin\to\for\]
given in section \ref{sec:forestswlevels}, we obtain a functor
    \[\con^{AS}(X)\colon\Delta\fin\op\to\cat{Top}.\]

\begin{thm}
    The functor $\con^{AS}(X)$ is the Axelrod--Singer model of the configuration category, in the sense of \cite{boavidaspaces}.
\end{thm}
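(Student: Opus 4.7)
The proof is an unwinding and comparison of definitions. Recall from \cite{boavidaspaces} that the Axelrod--Singer model assigns to a chain $\alpha\colon S_0\xrightarrow{f_0} S_1\to\cdots\to S_k$ in $\Delta\fin$ the closed stratum of $\AS_{S_0}(X)$ indexed by the forest $\Phi_\alpha$ on $S_0$ whose non-singleton vertices are the non-trivial fibers of the iterated compositions $f_{i-1}\circ\cdots\circ f_0$ for $i\geq 1$. On morphisms, degeneracies (i.e., inserting an identity in the chain) act as identities; inner faces and the face $\delta_k$ act as closed stratum inclusions inside a fixed $\AS_{S_0}(X)$; and the outer face $\delta_0$ acts as the forgetful map of Section~\ref{subsection: forgetting points}.

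The first step is to identify $F(\alpha)$ with $\Phi_\alpha$ as forests. Via Lemma~\ref{lem: forests}, the poset $F(\alpha)$ corresponds to a forest whose leaf set is its set of maximal elements. Tracing through the quotient relation $x\sim f_i(x)$ (imposed whenever $|f_i^{-1}(f_i(x))|=1$) and the order $x\leq y$ iff $x=g(y)$ for some composition $g$ of the $f_j$'s, one identifies the maximal elements with $S_0$ and the non-singleton vertices of $F(\alpha)$ with precisely the non-trivial fibers of iterated compositions $f_{i-1}\circ\cdots\circ f_0$. Thus $F(\alpha)=\Phi_\alpha$ and so $\con^{AS}(X)(\alpha)=\AS_{F(\alpha)}(X)=\AS_{\Phi_\alpha}(X)$.

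The second step is to match the morphism assignments. For degeneracies, both $F$ and the AS model give the identity. For inner faces and the last face $\delta_k$, the leaf set $S_0$ is preserved, so $F$ induces the coarsening inclusion of forests, matching the AS model's closed stratum inclusion inside $\AS_{S_0}(X)$. For the outer face $\delta_0$, $F$ produces a map that relabels leaves via any section of the quotient map induced by $f_0$, and the well-definedness at the level of $\for$ -- the content of the equivalence relation in Definition~\ref{defi : category of forests} -- corresponds exactly to the choice-independence of the forgetful map recipe of Section~\ref{subsection: forgetting points} on the AS side.

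The main obstacle is the combinatorial bookkeeping: aligning the several equivalent presentations of forests (as subsets of $P(S)$, as posets via Lemma~\ref{lem: forests}, as nests via Remark~\ref{rem : forests vs nests}) together with the opposite orderings involved in passing between them. Once this dictionary is fixed, both the object- and morphism-level agreement are essentially forced, and the simplicial identities built into $F$ translate to the expected compatibilities of stratum inclusions and forgetful maps on the AS side.
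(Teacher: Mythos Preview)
Your argument addresses a different and much easier question than what the theorem asserts. You verify that the paper's functor $\con^{AS}(X)$ agrees combinatorially with the screen-completion description in \cite{boavidaspaces}: that $F(\alpha)$ recovers the expected forest and that faces and degeneracies act as stratum inclusions and forgetful maps. But the content of the theorem is that $\con^{AS}(X)$ is a \emph{model for the configuration category} $\con(X)$, the object defined in \cite{boavidaspaces} via reverse exit paths (see Remark~\ref{rem:con}). That is a homotopical statement, not a combinatorial one, and your proof contains no comparison with $\con(X)$ at all.

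The paper's proof makes this explicit. It builds a zigzag of Segal spaces over $\fin$
\[
C \longrightarrow \mathcal{A} \longleftarrow \mathcal{B} \longrightarrow \con(M),
\]
where $C$ is $\con^{AS}(M)$ viewed as a simplicial space, $\mathcal{A}$ adjoins piecewise smooth (Moore) exit paths in the compactified configuration spaces, and $\mathcal{B}$ uses a Riemannian metric on $M$ together with piecewise \emph{geodesic} exit paths inside disjoint geodesically convex balls. The geometric point---and the reason the sketch in \cite{boavidaspaces} was incomplete---is that an arbitrary exit path in $M^S$ need not lift to the Axelrod--Singer boundary (think of one point spiralling into another), whereas a piecewise geodesic one does. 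None of this appears in your argument, so the main step is simply missing.

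A secondary issue: your identification of the maximal elements of $F(\alpha)$ with $S_0$ is not correct in general. Elements of $S_j$ for $j\geq 1$ that are not in the image of $f_{j-1}$ are also maximal; the leaf set consists of all elements of $\bigsqcup_i S_i$ not hit by any $f_i$. This does not affect the main gap above, but it does mean the combinatorial bookkeeping is not as you describe.
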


\begin{rem}\label{rem:con}
Recall from \cite{boavidaspaces} that the configuration category of the manifold $M$ -- denoted $\con(M)$ -- is the category whose space of objects is the disjoint union of configuration spaces $\Conf_S(M)$, $S$ a finite set (possibly the empty set), and a morphism from a configuration $x : R \hookrightarrow M$ to a configuration $y : S \hookrightarrow M$ is a pair $(f, H)$ where $f : R \to S$ is a map of finite sets, and $H$ is a (reverse) exit path in the stratified space $M^R$, stratified by the set of equivalence relations on $R$. The configuration category has an obvious reference map to the category of finite sets, and it is a good idea to take the nerve and regard $\con(M)$ as a simplicial space over the nerve of $\fin$. Given a simplicial space over $\fin$, $p : X \to \fin$, we may produce a functor $\Delta \fin \to \cat{Top}$ whose value at $\alpha \in \fin_k$ is simply $p^{-1}(\alpha)$. Conversely, given a functor $F : \Delta \fin \to \cat{Top}$ we may produce a simplicial space $X$ over $\fin$ by declaring that a $k$-simplex of $X$ is a pair $(f, z)$ where $f \in \fin_k$ and $z \in F(f)$. This induces an equivalence of ($\infty$-)categories.
\end{rem}

\begin{proof}[Sketch proof]
This is stated in \cite[Section 3]{boavidaspaces}, where the Axelrod-Singer model is called the \emph{screen completion} model, but the sketch there is incomplete. A more detailed account appears in \cite{Joao-thesis}. We will summarize that here. To compare $\con(M)$ and $\con^{AS}(M)$ we use the remark above to first view $\con^{AS}(M)$ as a simplicial space (actually, topological category) over the nerve of $\fin$, which we'll denote by $C$ for the duration of this proof.  That is, a $k$-simplex of $C$ is a pair $(f, z)$ where $f : S_0 \to \dots \to S_k$ is a $k$-simplex in $\fin$ and $z \in \con^{AS}(M)(f)$. We will describe a zigzag of the form
\[
C \to \mathcal{A} \gets \mathcal{B} \to \con(M)
\]
where $\mathcal{A}$ and $\mathcal{B}$ are certain topological categories over $\fin$, to be defined.

The category $\mathcal{A}$ has the same objects as $C$. A morphism in $\mathcal{A}$ is a pair $(a, H)$, where $a$ is a morphism in $C$ and $H$ is a (Moore) piecewise smooth exit path in the space of objects of $C$ ending at the source of $a$. The source of the morphism $(a,H)$ is $H(0)$ and the target is the target of $a$. Composition is by parallel transport. The functor $C \to \mathcal{A}$ is the identity on objects and on morphisms it is the inclusion of constant paths.

To describe the second auxiliary category $\mathcal{B}$, we equip $M$ with a Riemannian metric. An object in the category $\mathcal{B}$ is a pair $(x, \rho_x)$ where $x : \{1, \dots k\} \hookrightarrow M$ is a configuration and $\rho_x \in (\R_{> 0})^k$ is a tuple of positive real numbers subject to the following conditions:
\begin{enumerate}
\item for each $i \in \{1, \dots k\}$, the ball $B_i$ of radius ${\rho_x}_i$ centered at $x_i$ is geodesically convex (i.e. any two points in it are connected by a unique geodesic in that ball) and ${\rho_x}_i$ is less than the injectivity radius at $x_i$.
\item $B_i \cap B_j = \varnothing$ if $i \neq j$.
\end{enumerate}
The (disjoint) union of the $B_i$ will be denoted $B_x^\rho$. A morphism in $\mathcal{B}$ from $(x, \rho_x)$ to $(y, \tau_y)$ exists if $B_x^\rho \subset B_y^\tau$ and is given by an exit path in $B_y^\tau$ from $x$ to $y$ which is \emph{piecewise geodesic}. By a piecewise geodesic exit path we mean an exit path $H$ on ${B_y^\tau}$ for which one can find a finite partition of the interval into subintervals so that the restriction of $H$ to each subinterval is a geodesic. The functor $\mathcal{B} \to \con(M)$ forgets the balls, i.e. sends a pair $(x, \rho_x)$ to $x$. 

We are left to describe the functor $\mathcal{B} \to \mathcal{A}$. On objects, it also sends a pair $(x, \rho_x)$ to $x$, viewed as a configuration in the interior of the compactified configuration space. A morphism $(x, \rho_x) \to (y, \rho_y)$ in $\mathcal{B}$ given by a piecewise geodesic exit path $H$ from $x$ to $y$, with $x$ a configuration of $k$ points in $M$, gives rise to an exit path in the Axelrod-Singer \emph{compactification} of the configuration space of $k$ points in $M$. This is the main point, and why we need piecewise geodesic paths. An arbitrary path might not give rise to a path in the compactification, e.g. if one point is orbiting around another, being pulled towards it. 

Having defined these three functors, showing that they are equivalences, in fact degreewise weak equivalences of Segal spaces, is more mechanical. On spaces of objects, this is immediate. Then one observes that, in the four categories, the target map from the space of morphisms to objects is a fibration. And verifies, for each functor, that the induced map between fibers of the target maps is a weak equivalence.
\end{proof}

Similarly, we can prove the following proposition. Let us denote by $\cat{Var}_{\K}$ the category of smooth algebraic varieties over $\K$.

\begin{prop}
    For a smooth algebraic variety $X$ over $\K$, there is a functor $\for\op\to\cat{Var}_{\K}$ whose value on a forest $(S,\Phi)$ is the closed stratum $\FM_{\Phi}(X)\subset \FM_S(X)$.
\end{prop}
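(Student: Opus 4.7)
The argument is entirely analogous to the proof of the preceding proposition for the Axelrod--Singer compactification, with $\FM$ replacing $\AS$ and the algebraic-geometric forgetful maps from Section~\ref{subsection: forgetting points} replacing the topological ones. The plan is to first construct a functor $(\for')\op \to \cat{Var}_\K$ and then verify that it descends to $\for\op$.

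The category $\for'$ is generated by two types of morphisms: forest refinements $\id_S\colon(S,\Phi)\to(S,\Psi)$ with $\Phi\subset\Psi$, and injections $j\colon S\hra T$ viewed as morphisms $(S,j^{-1}\Psi)\to(T,\Psi)$. To a refinement we assign the closed inclusion of strata $\FM_\Psi(X)\hra\FM_\Phi(X)$, which is well-defined by property~(iv) of the Fulton--MacPherson compactification: $\FM_\Phi(X)\cap\FM_\Psi(X)=\FM_{\Phi\cup\Psi}(X)=\FM_\Psi(X)$. To an injection $j$ we assign the restriction to $\FM_\Psi(X)$ and $\FM_{j^{-1}\Psi}(X)$ of the forgetful map $p\colon\FM_T(X)\to\FM_S(X)$ constructed in Section~\ref{subsection: forgetting points}; this is well-defined because that section establishes that $p$ extends to a natural transformation $\for(T)\op\to\cat{Var}_\K$. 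Compatibility with composition in $(\for')\op$ then follows from functoriality of iterated algebraic blow-ups together with this naturality.

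The remaining step is to check that this functor descends to $\for\op$, i.e., that two injections $i,j\colon S\hra T$ identified in $\for$ induce the same restricted map $\FM_\Psi(X)\to\FM_{i^{-1}\Psi}(X)$ between strata. This is the main technical point, but it is transparent from the geometry of the strata: on $\FM_\Psi(X)$, elements of $T$ lying in a common block of $\Psi$ share both their macroscopic location in $X$ and the infinitesimal data recorded by the iterated blow-ups, so the forgetful maps $p_i$ and $p_j$ extract the same data on this stratum whenever $i$ and $j$ agree under the equivalence defining $\for$. This is the algebro-geometric counterpart of the ``trivially checked'' factorization appearing in the proof of the preceding proposition.
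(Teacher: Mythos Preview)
Your proposal is correct and follows exactly the route the paper takes: the paper gives no independent proof of this proposition, stating only ``Similarly, we can prove the following proposition'' in reference to the Axelrod--Singer case, and your argument is precisely that analogy spelled out (build the functor on $(\for')\op$ via stratum inclusions and the forgetful maps of Section~\ref{subsection: forgetting points}, then descend to $\for\op$). Your level of detail on the descent step slightly exceeds the paper's ``trivially checked'', but the structure is identical.
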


Again we can precompose this functor with the functor
$\Delta\fin\to\for$
of section \ref{sec:forestswlevels} and obtain a functor
\[\con^{FM}(X)\colon\Delta\fin\op\to\cat{Sch}_{\K},\]
that we call the \emph{Fulton--MacPherson model of the configuration category}.

\begin{rem}
    If $\K$ is the field of complex numbers, this object does not have the correct homotopy type in the sense that its analytification is not the configuration category of the analytification of $X$. However, our main Theorem, Theorem~\ref{thm:main}, asserts that this object can be equipped with a log-structure whose Kato--Nakayama realization is exactly the object $\con^{AS}(X(\C))$ constructed in the previous paragraph.
\end{rem}


\section{Wonderful compactifications and the Kato--Nakayama construction}

Our main result is about configuration spaces and their compactification. However, with no additional effort we can prove a more general result about wonderful compactifications so we take this small detour.
 
 \subsection{Wonderful compactifications}
We follow the conventions of \cite{liwonderful}.

\begin{defn}\label{def:wonderful}
	Fix a smooth algebraic variety $Y$ over $\C$ and a finite collection $\G$ of smooth closed subvarieties of $Y$. Let $\mathcal{S}$ be the set of all possible intersections of some elements of $\G$. Then $\G$ is called a \emph{building set} if for every $S\in\mathcal{S}$ the minimal elements in $\{G\in\G:G\supseteq S\}$ intersect transversely (the tangent bundles span $TY$) and their intersection is $S$. 

    The \emph{wonderful compactification} $Y_{\G}$ associated to this data is defined as the closure of
        \[Y-\bigcup_{\G}G\subset\prod_{G\in\G}\bl_{G}Y.\]
\end{defn}

It was shown in \cite{liwonderful} that $Y_{\G}$ can be constructed as an iterated blow-up
    \begin{equation}\label{eq:Li}
    Y_\G=\bl_{\wt{G}_n}\bl_{\wt{G}_{n-1}}\ldots\bl_{G_1}Y,
    \end{equation}
where $\G=\{G_1,\ldots,G_n\}$ is an ordering such that $\{G_1,\ldots,G_i\}$ is a building set for each $i\leq n$. 

The variety $Y_\G$ contains  a normal crossing divisor whose components are indexed by the elements of $\G$. 
The complement of this normal crossing divisor is isomorphic to the complement of the arrangement of subvarieties generated by $\G$ (i.e.\ the closure of $\G$ under intersection). 
\begin{defn}
    We denote by $\logY_\G$ the log scheme corresponding to this normal crossing divisor in $Y_\G$ (see Example~\ref{exam : main example of log scheme}). 
\end{defn}

We record the following proposition whose proof is immediate.

\begin{prop}\label{prop : iterated log blow up}
The log-scheme $\logY_\G$ is the iterated logarithmic blow-up of $(Y,\G)$, that is,
\[\logY_\G=\bl^{log}_{\wt{G}_n}\bl^{log}_{\wt{G}_{n-1}}\ldots\bl^{log}_{G_1}Y \; .\]
\end{prop}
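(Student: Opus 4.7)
The plan is to proceed by induction on $n=|\G|$. Fix the ordering $G_1,\ldots,G_n$ so that each truncation $\{G_1,\ldots,G_i\}$ is itself a building set, which is the hypothesis under which Li's formula \eqref{eq:Li} applies at every stage.

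For the base case $n=1$, the variety $Y_{\{G_1\}}$ equals $\bl_{G_1}(Y)$ and its NCD consists of the single exceptional divisor $\wt{G_1}$. Hence the divisorial log structure on $Y_{\{G_1\}}$ agrees on the nose with $\bll_{G_1}(Y)$ as defined in Definition~\ref{def: log blow-up} (starting from the trivial log structure on $Y$). For the inductive step, set $\G'=\{G_1,\ldots,G_{n-1}\}$; the inductive hypothesis gives $\logY_{\G'}=\bll_{\wt{G}_{n-1}}\cdots\bll_{G_1}(Y)$, and Li's formula identifies the underlying schemes of $\bll_{\wt{G_n}}(\logY_{\G'})$ and $\logY_\G$ as both being $\bl_{\wt{G_n}}(Y_{\G'})=Y_\G$.

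It remains to match the log structures. Write $D_i$ for the component of the NCD on $Y_\G$ indexed by $G_i\in\G$, and $D_i'$ for the analogous component on $Y_{\G'}$. The log structure on $\logY_\G$ is the collection $\{\mathcal{O}(D_i)\}_{1\leq i\leq n}$; the log structure on $\bll_{\wt{G_n}}(\logY_{\G'})$ is, by Definition~\ref{def: log blow-up}, $\{bl^*_{\wt{G_n}}\mathcal{O}(D_i')\}_{i<n}$ together with $\mathcal{O}(E)$ for the new exceptional divisor $E$. The divisor $E$ coincides tautologically with the new NCD component $D_n$. For $i<n$, the pullback decomposes as $bl^*_{\wt{G_n}}\mathcal{O}(D_i')\cong\mathcal{O}(D_i)\otimes\mathcal{O}(E)^{\otimes e_{in}}$, where $e_{in}\in\{0,1\}$ is the multiplicity of $D_i'$ along $\wt{G_n}$; this is precisely the data of an isomorphism of log structures in the sense defined in the paper (the resulting matrix $(e_{ij})$ being upper triangular with $1$'s on the diagonal).

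The principal, though modest, obstacle will be assembling the multiplicities $e_{in}$ appearing at each step of the induction into an honest isomorphism in the paper's category of log structures. The geometric content --- the smoothness and normal crossings of the dominant transforms of the $G_i$'s --- is entirely provided by Li's description of the wonderful compactification, so no further input beyond \cite{liwonderful} is needed.
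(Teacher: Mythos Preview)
The paper records this proposition as having an ``immediate'' proof and gives no argument, so there is no detailed reasoning to compare against. Your inductive approach via Li's iterated blow-up description is the natural one, and the identification of underlying schemes is correct.

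There is, however, a genuine gap in the step where you match the log structures. You assert that the upper-triangular matrix $(e_{ij})$ with $1$'s on the diagonal furnishes an \emph{isomorphism} of log structures in the paper's sense. But in the paper's category a morphism is given by a matrix of \emph{natural} numbers, and the inverse of your matrix has off-diagonal entries $-e_{in}$, which lie outside $\mathbb{N}$ whenever some $e_{in}>0$. So what you have written down is only a one-way morphism. And nonzero multiplicities genuinely can occur for orderings allowed by Li's condition: if $G_n\subsetneq G_i$ for some $i<n$, the dominant transform $\wt{G_n}$ lands inside the exceptional divisor $D_i'$, giving $e_{in}=1$.

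The cleanest fix is to observe that an ordering by non-decreasing dimension --- which always exists, satisfies Li's hypothesis, and is the one used for the Fulton--MacPherson compactification --- forces all $e_{in}=0$. Indeed, if $\wt{G_n}\subset D_i'$ then projecting down to $Y$ yields $G_n\subset G_i$; combined with $\dim G_i\le\dim G_n$ and irreducibility this gives $G_n=G_i$, a contradiction. Hence $bl^*_{\wt{G_n}}\mathcal{O}(D_i')=\mathcal{O}(D_i)$ on the nose for every $i<n$, the two log structures literally coincide, and your induction goes through without any matrix argument. With this remark the proof really is immediate, as the paper claims.
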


\begin{defn}
    In the real oriented world, we may perform the same sequence of blow-ups of  $Y(\C)$ instead. This gives a topological space that we denote by $Y_{\G}^\R$. 
\end{defn}

Let $(Y,\G)$ be as in Definition~\ref{def:wonderful}. Then the variety $Y$ is stratified by $P(\mathcal{G})$, the poset of subsets of $\G$, with the stratum indexed by $\mathcal{H}\subset\mathcal{G}$ given by $\bigcap_{G\in\mathcal{H}}G$. We make the convention that the closed stratum corresponding to the empty set is $Y$. These strata determine a functor
    \[P(\mathcal{G})\op\to \cat{Var}_{\K}\]
which, taking $\C$-points, induces a functor
    \[P(\mathcal{G})\op\to \cat{Top}.\]
Note that each time we blow up one element of $\G$, we obtain a variety stratified by the same poset (we replace each subvariety by their dominant transform). In particular, the wonderful compactifications $Y_{\G}$ and $Y_{\G}^\R$ are both stratified by the poset $P(\G)\op$.

\begin{rem}\label{rem : nests}
    For $Y_{\G}$ and $Y_{\G}^\R$, the poset $P(\G)$ can be replaced by the subposet $N(\G)$ of $\G$-nests \cite[Definition 2.3]{liwonderful} as these are the only elements of $P(\G)$ whose corresponding stratum is non-empty.
\end{rem}

\begin{example}\label{ex:wonderful-FM-AS}
	Our main examples are compactifications of configuration spaces $\Conf_n(X)$. 
 
    In the algebro-geometric case, $X$ is a smooth algebraic variety, and we take $Y=X^n$ and $\G$ the set of diagonals $\Delta_S$ for every $S\subset \{1,\ldots,n\}, |S|\geq 2$. Then the arrangement of subvarieties generated by $\G$ is the arrangement of polydiagonals, and the construction $Y_\G$ is isomorphic to the variety $\FM_n(X)$, the Fulton--MacPherson compactification of $\Conf_n(X)$ from Section~\ref{subsec:FM}, see \cite[Sec.4.2]{liwonderful}. 
 
    In the topological case, we instead consider $Y(\C)=X(\C)^n$, and the construction $Y_{\G}^\R$ is isomorphic to the Axelrod--Singer compactification $\AS_n(X(\C))$ from Section~\ref{subsec:AS}. 

	In both cases the poset $P(\G)$ is the poset of family of subsets of $\{1,\ldots,n\}$ with at least two elements. The poset $N(\G)$ is isomorphic to $\for(n)$ (see Remark \ref{rem : forests vs nests}).
\end{example}

\subsection{Kato--Nakayama commutes with wonderul compactifications}

Our main theorem in the general setting of wonderful compactifications is the following.

\begin{thm}\label{theo : KN of wonderful}
	There is a homeomorphism 
\[\kn(\logY_\G)\cong Y_\G^\R.\]
    Moreover, this homeomorphism identifies the stratification on $Y_\G^\R$ indexed by $P(\G)\op$ with the pullback (Definition \ref{defi : pullback stratification}) of the stratification of $Y_\G(\C)$ along the projection map
\[\kn(\logY_\G)\to Y_\G(\C).\]
\end{thm}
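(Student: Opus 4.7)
The plan is to induct on $n = |\G|$. The base case $n=0$ is immediate: the log structure on $Y_\varnothing$ is trivial, and both $\kn(\logY_\varnothing)$ and $Y_\varnothing^\R$ equal $Y(\C)$, with only one stratum.

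For the inductive step, fix an admissible ordering $\G = \{G_1, \ldots, G_n\}$ as in \eqref{eq:Li} and set $\G' = \G \setminus \{G_n\}$, which remains a building set. Write $\wt{\mathsf{G}}_n$ for the log scheme on $\wt G_n$ with log structure induced from $\logY_{\G'}$. Proposition~\ref{prop : iterated log blow up} gives $\logY_\G \cong \bll_{\wt G_n}(\logY_{\G'})$, so Proposition~\ref{prop : kn of log blow-up} yields
\[
	\kn(\logY_\G) \cong \blr_{\kn(\wt{\mathsf{G}}_n)}(\kn(\logY_{\G'})).
\]
By the inductive hypothesis $\kn(\logY_{\G'}) \cong Y_{\G'}^\R$ over $Y_{\G'}(\C)$, and by Proposition~\ref{prop : KN of a pullback} applied to the inclusion $\wt G_n \hookrightarrow Y_{\G'}$, the center $\kn(\wt{\mathsf{G}}_n)$ is identified with the preimage of $\wt G_n(\C)$ in $Y_{\G'}^\R$ under the blow-down map.

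On the other side, $Y_\G^\R$ is by definition the iterated real blow-up $\blr_{\wt G_n} \cdots \blr_{G_1} Y(\C)$, and its last step is a real blow-up of $Y_{\G'}^\R$ along the real dominant transform of $G_n$. I would identify this transform with the same preimage using Proposition~\ref{prop : cartesian square}, which expresses it as the pullback along $Y_{\G'}^\R \to Y_{\G'}(\C)$ of the algebraic $\blr_{\wt G_n(\C)}(Y_{\G'}(\C))$. Equivalently, iterating Propositions~\ref{prop : real blow-up is real-complex blow-up} and~\ref{prop : independant on the order}, one can rewrite $Y_\G^\R$ directly as $\blr_{E_1(\C)} \cdots \blr_{E_n(\C)} Y_\G(\C)$, where $E_i$ is the $i$-th exceptional divisor; this is precisely $\kn(\logY_\G)$ by Definition~\ref{def:KN}.

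For the stratification claim, by Remark~\ref{rem : nests} both $Y_\G(\C)$ and $Y_\G^\R$ are stratified by the nest poset $N(\G)\op$ (with empty strata accounting for the rest of $P(\G)\op$). Since the homeomorphism constructed at each inductive step commutes with the blow-down maps to $Y_\G(\C)$, the pullback of the algebraic stratification matches the intrinsic one on $Y_\G^\R$. The hardest part will be reconciling the ``real dominant transform'' implicit in the definition of $Y_\G^\R$ with the set-theoretic preimage produced by Proposition~\ref{prop : KN of a pullback}: building-set transversality (Definition~\ref{def:wonderful}) ensures that dominant transforms meet all exceptional divisors transversally, so no extraneous components arise; the cleanest execution is likely via the ``exceptional divisor'' rewriting above, which sidesteps the direct identification altogether.
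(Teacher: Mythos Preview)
Your inductive argument for the homeomorphism is essentially the paper's: same induction on $|\G|$, same use of Proposition~\ref{prop : iterated log blow up} followed by Proposition~\ref{prop : kn of log blow-up}, same identification of the blow-up center via Proposition~\ref{prop : KN of a pullback}. (The paper starts the induction at $n=1$ rather than $n=0$, but that is immaterial.) Your alternative ``exceptional divisor'' rewriting via Propositions~\ref{prop : real blow-up is real-complex blow-up} and~\ref{prop : independant on the order} is a valid shortcut the paper does not make explicit.

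Where you and the paper diverge is the stratification claim. You attempt to carry it along the induction and correctly isolate the sticking point---matching the real dominant transform of a stratum with the set-theoretic preimage under $p\colon Y_\G^\R \to Y_\G(\C)$---but leave its resolution somewhat open. The paper bypasses this by \emph{not} inducting on the stratification at all: it first treats the case $|S|=1$, i.e.\ a single divisor $D_G\subset Y_\G$, showing directly that $D_G^\R = p^{-1}(D_G)$ (Proposition~\ref{prop : KN of a pullback} for the $\kn$ side, and tracking the dominant transform through the blow-ups for the real side). Then for arbitrary $S\subset\G$ it simply writes $Z_S = \bigcap_{G\in S} D_G$ and uses $\bigcap_{G\in S} p^{-1}(D_G) = p^{-1}(Z_S)$. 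This decomposition into single-divisor pieces is cleaner than pushing the full stratified statement through the induction, and it dissolves the ``hardest part'' you flag rather than confronting it head-on.
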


\begin{proof}
    We first prove the statement ignoring the stratification. The proof is by induction on the length of the ordered set $\G=\{G_1,\ldots, G_n\}$. For $n=1$, we use the definition $\logY_\G=(\bl_{G_1}Y,\wt{G}_1)$ and Proposition \ref{prop : real blow-up is real-complex blow-up}:
\[\kn(\logY_\G)\coloneqq\blr_{\wt{G}_1(\C)}(\bl_{G_1}Y(\C))\cong\blr_{G_1(\C)}(Y(\C))\eqqcolon Y_\G^\R.\]
    Assume that the proposition has been proved for length $n-1$, and denote $\mathcal{H}=\{G_1,\ldots,G_{n-1}\}$. By Proposition \ref{prop : iterated log blow up}, we have that
\[\logY_\G=\bll_{\wt{G}_n}(\logY_{\mathcal{H}}).\]
    Thus, Proposition~\ref{prop : kn of log blow-up} gives a homeomorphism
\[\kn(\logY_\G)
    \cong\blr_{\kn(\wt{G}_n)}(\kn(\logY_{\mathcal{H}}))
    \cong\blr_{\kn(\wt{G}_n)}(Y_{\mathcal{H}}^\R)\]
	where the second homeomorphism comes from the induction hypothesis. The right hand side of this equation is exactly $Y_\G^\R$. (Indeed, $\kn(\wt{G}_n)$ agrees with the inverse image of $\wt{G}_n$ under the projection from the real blowup to the complex blowup.)
		
	Now, we show that this homeomorphism is compatible with the stratification. We start with a stratum corresponding to one element $G$ of $\G$. In this case the corresponding closed stratum in $Y_\G$ is a divisor $D_G$ in $Y_\G$. By Proposition \ref{prop : KN of a pullback}, if we give $D_G$ the log-structure induced by the inclusion $D_G\to Y_\G$, we have a cartesian square
	\[
\begin{tikzcd}
	\kn(\mathsf{D}_G)\rar\dar & \kn(\logY_\G)\cong Y_\G^{\R}\dar{p}\\
	D_G\rar{} & Y_\G,
\end{tikzcd}
\]
    On the other hand, the closed stratum $D_G^\R$ indexed by $G$ in $Y_\G^{\R}$ is the dominant transform of $G$ in $Y_\G^\R$ which can easily be identified with $p^{-1}(D_G)$ if we track it through the sequence of blow-ups. This shows compatibility for the stratum for $G$.
	
	 Consider now the closed stratum $Z_S$ in $Y_\G$ indexed by $S\subset \G$. By definition, we have
\[Z_S=\bigcap_{G\in S}D_G.\]
    Then the corresponding closed stratum in the pullback stratification of $\kn(\logY_\G)$ is $\kn(\logZ_S)$ where $\logZ_S$ is given the log-structure pulled-back along the inclusion $Z_S\to Y_\G$. By Proposition \ref{prop : KN of a pullback}, we have a cartesian square
	\[
\begin{tikzcd}
	\kn(\logZ_S)\rar\dar & \kn(\logY_\G)\dar{p}\\
	Z_S\rar{} & Y_\G,
\end{tikzcd}
\]
    On the other hand, using the case $|S|=1$ that we have just treated, the stratum $Z_S^\R$ indexed by $S$ in $Y_\G^\R$ is given by
\[Z_S^\R=\bigcap_{G\in S} D_G^\R=\bigcap_{G\in S} p^{-1}(D_G)=p^{-1}(\bigcap_{G\in S}D_G)=p^{-1}(Z_S),\]
    which concludes the proof.
\end{proof}

\begin{cor}\label{cor:KN(FM)=AS}
    For any finite set $S$ and any smooth complex algebraic variety $X$, there is a homeomorphism of $\for(S)\op$-stratified spaces
\[\kn(\logFM_S(X))\cong \AS_S(X(\C)).\]
\end{cor}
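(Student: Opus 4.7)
The plan is to deduce this corollary as a direct specialization of Theorem~\ref{theo : KN of wonderful}. I would first invoke Example~\ref{ex:wonderful-FM-AS}: take $Y = X^{S}$ and let $\G$ be the building set consisting of all diagonals $\Delta_U \subset X^{S}$ with $U \subseteq S$, $|U|\geq 2$. Then the wonderful compactification $Y_\G$ is isomorphic to $\FM_S(X)$ as a smooth variety over $\C$, and the real wonderful compactification $Y_\G^\R$ is the Axelrod--Singer compactification $\AS_S(X(\C))$.

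Next I would identify the log structures. By construction, $\logY_\G$ is the log-scheme associated to the normal crossing divisor in $Y_\G = \FM_S(X)$ whose components are indexed by elements of $\G$, i.e.\ by the diagonals $\Delta_U$ for $|U| \geq 2$. By property (iii) of $\FM_S(X)$ in Section~\ref{subsec:FM}, these are exactly the irreducible components of the boundary divisor $\FM_S(X) \setminus \Conf_S(X)$. Hence $\logY_\G$ agrees, by definition, with the divisorial log-structure $\logFM_S(X)$ on $\FM_S(X)$. Applying Theorem~\ref{theo : KN of wonderful} then produces a homeomorphism
\[
\kn(\logFM_S(X)) \;=\; \kn(\logY_\G) \;\cong\; Y_\G^\R \;=\; \AS_S(X(\C)),
\]
as required at the level of topological spaces.

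It remains to match the stratifications. Theorem~\ref{theo : KN of wonderful} gives compatibility with the $P(\G)^{op}$-stratifications on both sides. By Remark~\ref{rem : nests}, only the subposet $N(\G) \subset P(\G)$ of $\G$-nests indexes non-empty strata on both $Y_\G$ and $Y_\G^\R$, and in our situation Example~\ref{ex:wonderful-FM-AS} (together with Remark~\ref{rem : forests vs nests}) identifies $N(\G)$ with $\for(S)$ by augmenting a nest with all singletons. Under this identification the stratum indexed by a nest $\mathcal{H} \subseteq \G$ corresponds to the forest stratum indexed by the associated $\Phi \in \for(S)$, on both the algebro-geometric and the real-oriented sides. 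Composing with the stratification-preserving homeomorphism from Theorem~\ref{theo : KN of wonderful} yields the desired homeomorphism of $\for(S)^{op}$-stratified spaces.

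The only subtle step is the identification of the log-structure with the divisorial one; everything else is formal specialization. Once one checks that the components coming from the iterated blow-up description \eqref{eq:Li} of $\FM_S(X)$ are precisely the $\Delta_U$-divisors comprising the boundary, the corollary follows without further work.
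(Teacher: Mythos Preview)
Your proposal is correct and follows exactly the same approach as the paper: the paper's proof is a one-line invocation of Theorem~\ref{theo : KN of wonderful}, Remark~\ref{rem : nests}, and Example~\ref{ex:wonderful-FM-AS}. You have simply unpacked these references in more detail, including the identification of the divisorial log structure and the passage from $P(\G)$ to $\for(S)$ via nests, which the paper leaves implicit.
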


\begin{proof}
    This is an immediate application of Theorem \ref{theo : KN of wonderful}, Remark \ref{rem : nests} and Example~\ref{ex:wonderful-FM-AS}.
\end{proof}

\section{The algebro-geometric configuration category}

\begin{prop}\label{prop : compatibility with forgetting points}
    Let $i\colon S\hra T$ be an injection. Then
\begin{enumerate}
    \item The forgetful map $p\colon\FM_T(X)\to\FM_S(X)$ can be promoted to a map of log schemes
\[p\colon\logFM_T(X)\to\logFM_S(X).\]
    \item Upon applying Kato--Nakayama realization it induces the forgetful map
\[p\colon\AS_T(X(\C))\to\AS_S(X(\C)).\]
\end{enumerate}
\end{prop}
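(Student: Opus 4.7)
The plan is to use the factorization
\[
p\colon\FM_T(X)\xrightarrow{q}\FM_{T,S}(X)\xrightarrow{r}\FM_S(X)
\]
of Subsection~\ref{subsection: forgetting points}, equip $\FM_{T,S}(X)$ with the log structure whose line bundles are $\mathcal{O}(D^{T,S}_{i(V)})$ for $V\subset S$ with $|V|\geq 2$ (the exceptional divisors of the iterated blow-up defining it), and promote each factor to a morphism of log schemes separately. Part (ii) will then follow by analyzing what each factor becomes after Kato--Nakayama realization, using the tools assembled in the previous section.

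For the map $r$, the key observation is that $\FM_{T,S}(X)$ is, by its very construction, the iterated blow-up of $X^T$ along the smooth preimages $\Delta_{i(V)}^T=\bar p^{-1}(\Delta_V^S)$ under the smooth projection $\bar p\colon X^T\to X^S$. The algebro-geometric analog of Proposition~\ref{prop : cartesian square} then yields a Cartesian square $\FM_{T,S}(X)\cong X^T\times_{X^S}\FM_S(X)$, so that each $\mathcal{O}(D^{T,S}_{i(V)})$ is canonically isomorphic to $r^*\mathcal{O}(D^S_V)$ with matching canonical section. This makes $r$ a morphism of log schemes, and Proposition~\ref{prop : KN of a pullback} identifies $\kn(r)$ with the pullback of $\AS_S(X(\C))=\kn(\logFM_S(X))$ along $X^T(\C)\to X^S(\C)$, i.e.\ the second stage of the corresponding topological factorization of the forgetful map.

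For the map $q$, I will apply Proposition~\ref{prop : iterated log blow up}: $\logFM_T(X)$ is the iterated logarithmic blow-up of the log scheme just defined on $\FM_{T,S}(X)$ along the dominant transforms of the remaining diagonals $\Delta_U^T$ with $U\subset T$ not of the form $i(V)$, ordered by increasing dimension. Each such log blow-down is tautologically a morphism of log schemes, so $q$ is one as well. Iterating Proposition~\ref{prop : kn of log blow-up} together with Corollary~\ref{cor:KN(FM)=AS} identifies $\kn(q)$ with the corresponding iterated real oriented blow-down, which is precisely the first stage of the topological factorization. Composing both pieces recovers the topological forgetful map $\AS_T(X(\C))\to\AS_S(X(\C))$.

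The main obstacle will be the bookkeeping required to align the two factorizations: one must choose the orderings of blow-ups on the algebraic and topological sides compatibly, so that the two applications of the KN--blow-up comparison assemble into a single forgetful map. Independence of intermediate results from the chosen ordering is handled by Proposition~\ref{prop : independant on the order}, and the fact that $\Delta_{i(V)}^T=\bar p^{-1}(\Delta_V^S)$ keeps the relevant blow-up loci transversal throughout.
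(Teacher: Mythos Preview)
Your argument for part (i) is essentially identical to the paper's: equip $\FM_{T,S}(X)$ with the divisorial log structure coming from its exceptional divisors, observe that this log structure is the pullback of that of $\logFM_S(X)$ along $r$, and that the log structure of $\logFM_T(X)$ contains it as a substructure so that $q$ is a log morphism. The extra justification you give for $r$ via the Cartesian square $\FM_{T,S}(X)\cong X^T\times_{X^S}\FM_S(X)$ is correct and makes the pullback claim explicit.

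For part (ii), however, the paper takes a much shorter route than you do. Rather than tracing the Kato--Nakayama realization through each factor of the factorization and matching it against a parallel topological factorization, the paper simply observes that both the map $\kn(p)$ and the topological forgetful map $\AS_T(X(\C))\to\AS_S(X(\C))$ restrict to the obvious projection $\Conf_T(X(\C))\to\Conf_S(X(\C))$ on the interior. Since the interior is dense in $\AS_T(X(\C))$ and the target is Hausdorff, the two continuous maps must agree everywhere. This one-line density argument bypasses all of the bookkeeping you anticipate: there is no need to worry about compatible orderings of blow-ups, no need to invoke Propositions~\ref{prop : kn of log blow-up} or~\ref{prop : KN of a pullback}, and no need to verify that the intermediate KN space coincides with the topological $\AS_{T,S}$. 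Your approach would work and is more explicit about what $\kn$ does to each piece, but the ordering issues you flag (ensuring that one can first blow up all diagonals $\Delta_{i(V)}$ and then the remaining ones while keeping a building set at each stage) do require a small extra argument that the density trick avoids entirely.
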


\begin{proof}
(i) We use our factorization
\[\FM_T(X)\to \FM_{T,S}(X)\to \FM_S(X)\]
from subsection \ref{subsection: forgetting points}. We give $\FM_{T,S}(X)$ the log-structure obtained from the exceptional divisors corresponding to the diagonals that have been blown-up (see Example \ref{exam : main example of log scheme}) and denote by $\logFM_{T,S}(X)$ the resulting log scheme. Then the map
\[\FM_T(X)\to \FM_{T,S}(X)\]
is canonically a map of log schemes. Indeed, the pullback log structure on $\FM_T(X)$ is simply the log-structure corresponding to the diagonals indexed by the image of $i$.

On the other hand, the log-structure of $\FM_{T,S}(X)$ is also the pullback of the log-structure of $\FM_S(X)$ along the projection map $p\colon\FM_{T,S}(X)\to\FM_S(X)$. It follows that the map
\[\FM_{T,S}(X)\to \FM_S(X)\]
is also a map of log schemes.

(ii) We wish to identify two continuous maps $\AS_T(X(\C))\to\AS_S(X(\C))$. By definition they coincide on the interior of $\AS_T(X(\C))$ which is an open dense subset so they have to coincide on the whole space.
\end{proof}


Finally, we prove our main result.

\begin{thm}\label{thm:main}
    Let $X$ be a smooth algebraic variety. There exists a functor
\[\con^{log}(X)\colon\Delta\fin\op\to\cat{LogSch}_{\K}\]
    such that
\begin{itemize}
    \item composing with the forgetful functor $\cat{LogSch}_{\K}\to\cat{Sch}_{\K}$ recovers the functor $\con^{FM}(X)$.
    \item taking $\K=\C$ and applying Kato--Nakayama realization gives the functor $\con^{AS}(X(\C))$.
\end{itemize}
\end{thm}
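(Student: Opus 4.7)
The plan is to construct $\con^{log}(X)$ by lifting the functor $\for\op\to\cat{Sch}_{\K}$, $(S,\Phi)\mapsto \FM_\Phi(X)$, whose composition with $F\colon\Delta\fin\to\for$ defines $\con^{FM}(X)$, to a functor valued in log schemes. Concretely, each closed stratum $\FM_\Phi(X)\hookrightarrow \FM_S(X)$ will be equipped with the log structure pulled back from the divisorial log structure on $\FM_S(X)$; this defines the log scheme $\logFM_\Phi(X)$, which agrees with $\logFM_S(X)$ when $\Phi$ is minimal.

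For morphisms, it suffices to treat the two generating kinds in $\for$: inclusions $\Phi\subset\Psi$ on a fixed set $S$, and injections $j\colon S\hookrightarrow T$ between underlying sets with $\Phi\subset j^{-1}\Psi$. For an inclusion of forests, the closed embedding $\FM_\Psi(X)\hookrightarrow\FM_\Phi(X)$ is automatically a map of log schemes since the pullback of the log structure on $\FM_S(X)$ along the composite $\FM_\Psi\hookrightarrow\FM_\Phi\hookrightarrow\FM_S$ agrees with the direct pullback. For an injection of underlying sets, Proposition~\ref{prop : compatibility with forgetting points}(i) promotes the forgetful map $\FM_T(X)\to\FM_S(X)$ to a morphism of log schemes $\logFM_T(X)\to\logFM_S(X)$; this restricts to the scheme map $\FM_\Psi(X)\to\FM_\Phi(X)$ described in Section~\ref{subsection: forgetting points}, and the restriction is a map of log schemes because both source and target carry the log structures pulled back from $\FM_T(X)$ and $\FM_S(X)$ respectively.

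For the Kato--Nakayama part, Corollary~\ref{cor:KN(FM)=AS} provides a homeomorphism $\kn(\logFM_S(X))\cong \AS_S(X(\C))$ of $\for(S)\op$-stratified spaces. Applying Proposition~\ref{prop : KN of a pullback} to the closed embedding $\FM_\Phi(X)\hookrightarrow \FM_S(X)$ yields a compatible homeomorphism $\kn(\logFM_\Phi(X))\cong \AS_\Phi(X(\C))$ identifying each closed stratum with its Axelrod--Singer counterpart. Functoriality along inclusions of forests is then automatic (both sides are closed embeddings of strata), while for forgetful maps it is exactly the content of Proposition~\ref{prop : compatibility with forgetting points}(ii), which identifies the Kato--Nakayama of the log forgetful map with the topological forgetful map of Axelrod--Singer. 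Precomposing with $F\colon\Delta\fin\to\for$ therefore produces the functor $\con^{AS}(X(\C))$ of the previous section.

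The main obstacle will be verifying that the assignment descends from $\for'$ to the quotient category $\for$, that is, that the morphism of log schemes $\logFM_\Psi(X)\to\logFM_\Phi(X)$ depends only on the equivalence class of the injection $j$ and not on $j$ itself. This reduces to the corresponding statement for the underlying scheme map, which already holds because the functor $\for\op\to\cat{Var}_{\K}$ exists; the log refinement adds no additional data since on both sides the log structures are canonical pullbacks, and the morphism of log structures is completely determined by the underlying scheme morphism together with the universal pullback square. Compatibility with composition is similarly reduced to the already-established scheme-level functoriality, so the log-enhanced functor $\con^{log}(X)$ is well defined, and the two required identifications with $\con^{FM}(X)$ and $\con^{AS}(X(\C))$ hold by construction.
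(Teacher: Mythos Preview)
Your proposal is correct and follows essentially the same route as the paper: lift the functor $(S,\Phi)\mapsto\FM_\Phi(X)$ to log schemes by pulling back the divisorial log structure from $\logFM_S(X)$, check the two generating classes of morphisms in $\for$ separately (inclusions of forests via naturality of pullback, injections of underlying sets via Proposition~\ref{prop : compatibility with forgetting points}(i)), and then identify the Kato--Nakayama realization using Corollary~\ref{cor:KN(FM)=AS} and Proposition~\ref{prop : compatibility with forgetting points}(ii). Your invocation of Proposition~\ref{prop : KN of a pullback} for the stratum-level identification is slightly redundant, since the stratified statement of Corollary~\ref{cor:KN(FM)=AS} already delivers $\kn(\logFM_\Phi(X))\cong\AS_\Phi(X(\C))$, and your explicit discussion of descent from $\for'$ to $\for$ is a point the paper leaves implicit; otherwise the arguments coincide.
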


\begin{proof}
We show that the functor $\for\op\to \cat{Sch}_{\K}$ given by
\[(S,\Phi)\mapsto \logFM_{\Phi}(X)\]
lifts to a functor to $\cat{LogSch}_{\K}$ and that the precomposition of the lifted functor with the map $\Delta\fin\op\to\for\op$ satisfies the properties in the statement. 

We have explained how $\logFM_{\Phi}(X)$ is a log scheme. 
Since maps in $\for$ are composites of maps of the form 
\[\id_S\colon(S,\Phi)\to (S,\Psi)\]
    with $\Phi \subset \Psi$ and maps of the form
\[i\colon(S,i^{-1}\Phi)\to (T,\Phi)\]
    with $i\colon S\to T$ an injection, it suffices to show that each of these maps induces map of log schemes. In the first case, this follows immediately from the definition of the log-structures on $\logFM_\Phi(X)$ and $\logFM_\Psi(X)$ (they are simply pullbacks of the log-structure of $\logFM_S(X)$). In the second case, this is Proposition~\ref{prop : compatibility with forgetting points}.

    Finally, this functor satisfies the above two requirements: this is obvious for the first and, for the second one, this is a consequence of Corollary~\ref{cor:KN(FM)=AS} and Proposition~\ref{prop : compatibility with forgetting points}.
\end{proof}

\subsection{The real wonderful compactification as a closure}

We conclude this section with a description of the real wonderful compactification as a closure. This is important as it reconciles the two existing definitions of the Axelrod--Singer compactification available in the literature: one through iterated blow-ups and the other as the closure of the inclusion of the configuration space in a product. Again, this is valid for any wonderful compactification. The case of configuration spaces can be found in \cite[Lemma C.1]{watanabeaddendum}.

\begin{prop}
	The closure of the image of the map
\[Y(\C)-\bigcup_{G\in\G}G(\C)\to \prod_{G\in\G}\blr_{G(\C)}(Y(\C))\]
	is isomorphic to $Y^\R_{\G}$.
\end{prop}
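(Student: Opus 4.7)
The plan is to transport the algebro-geometric closure definition of $Y_\G$ inside $\prod_G\bl_GY$ (Definition~\ref{def:wonderful}) to the real oriented setting via the Kato--Nakayama construction. Equip the product $\prod_G\bl_GY$ with the log structure obtained by pulling back the exceptional divisor section from each factor. After some bookkeeping, the closed embedding $Y_\G\hookrightarrow\prod_G\bl_GY$ extends to a morphism of log schemes whose pullback log structure on $Y_\G$ has the same Kato--Nakayama realization as the normal crossing divisor log structure $\logY_\G$.

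The first step I would carry out is to check that Kato--Nakayama commutes with finite products: $\kn(\logX\times\logY)\cong\kn(\logX)\times\kn(\logY)$. This follows from iterating Proposition~\ref{prop : cartesian square}, since every section in the log structure of a product is pulled back from one of the factors, so the corresponding real oriented blow-up is a pullback and leaves the other factor untouched. In particular, $\kn$ of $\prod_G\bl_GY$ with the above log structure is $\prod_G\blr_GY(\C)$. The second step is to apply Proposition~\ref{prop : KN of a pullback} to the closed embedding $Y_\G\hookrightarrow\prod_G\bl_GY$: combined with the first step, this realizes $\kn(\logY_\G)$ as the preimage of $Y_\G(\C)$ in $\prod_G\blr_GY(\C)$, hence as a closed subspace of the product. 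By Theorem~\ref{theo : KN of wonderful}, this closed subspace is exactly $Y_\G^\R$.

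To conclude, observe that $Y(\C)-\bigcup G(\C)$ embeds as a dense open subset of $Y_\G^\R$: each step of the iterated real blow-up is a homeomorphism over the complement of its blow-up locus, so the open stratum of $Y_\G^\R$ is precisely $Y(\C)-\bigcup G(\C)$, and this embedding agrees, under the identification above, with the map from the statement. Since $Y_\G^\R$ is closed in the product, taking closure there recovers $Y_\G^\R$ itself. I expect the main obstacle to be the careful bookkeeping in the first paragraph: the pullback log structure on $Y_\G$ and the normal crossing divisor log structure $\logY_\G$ are not identical at the level of individual line bundles, since the pullback of the exceptional divisor of $\bl_GY$ decomposes as a sum of normal crossing components with positive multiplicities; one must check that these data nevertheless yield the same Kato--Nakayama space. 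Once this compatibility is pinned down, the remainder of the argument follows formally from Propositions~\ref{prop : cartesian square} and \ref{prop : KN of a pullback} and the density of the open stratum.
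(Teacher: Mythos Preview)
Your approach is essentially the same as the paper's: both arguments establish that $Y_\G^\R$ sits as a closed subspace of $\prod_G\blr_{G(\C)}Y(\C)$ by fitting it into a cartesian square over the closed embedding $Y_\G(\C)\hookrightarrow\prod_G\blc_{G(\C)}Y(\C)$ (the latter being closed by \cite[Proposition~2.13]{liwonderful}), and then conclude using density of the open stratum. The paper's version is shorter because it invokes \cite[8.4.3]{bergstromhyperelliptic} directly for the cartesian square, whereas you attempt to rebuild that square from Propositions~\ref{prop : cartesian square} and~\ref{prop : KN of a pullback}.

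The obstacle you flag is genuine and is exactly the content that the external citation absorbs. Proposition~\ref{prop : KN of a pullback} gives a cartesian square whose top-left corner is $\kn$ of $Y_\G$ equipped with the \emph{pullback} log structure from the product, and this is not literally $\logY_\G$: the pullback of the exceptional-divisor section of $\bl_G Y$ is a tensor product $\bigotimes_{G'} \sigma_{G'}^{e_{G,G'}}$ of the normal-crossing sections. Your plan to argue that the two log structures nonetheless yield homeomorphic Kato--Nakayama spaces is the right instinct, but be careful: for a single section the real oriented blow-up is \emph{not} invariant under replacing $\sigma$ by $\sigma^e$ (already $\blr_z(\C)$ and $\blr_{z^2}(\C)$ differ). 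What saves the situation is rather that the matrix $(e_{G,G'})$ is unipotent in a suitable ordering, so the iterated blow-ups along the pullback sections and along the $\sigma_{G'}$ can be matched step by step; equivalently, one appeals to the general compatibility statement in \cite[8.4.3]{bergstromhyperelliptic}. Once that is in hand, the rest of your argument goes through exactly as you describe.
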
 

\begin{proof}
	First, we observe that the map
\[Y(\C)-\bigcup_{G\in\G}G(\C)\to Y_\G^{\R}\]
	is injective with dense image. This follows from the fact that the target is a manifold with corners and the source is the interior of this manifold.

	Now, consider the following commutative diagram of topological spaces
\[\begin{tikzcd}
	Y(\C)-\bigcup_{G\in\G}G(\C)\ar[r]&Y_\G^\R\ar[r]\ar[d]&\prod_{G\in\G}\blr_{G(\C)}(Y(\C))\ar[d]\\
 	&Y_{\G}(\C)\ar[r]&\prod_{G\in\G}\blc_{G(\C)}(Y(\C))
\end{tikzcd}
\]
	According to \cite[8.4.3]{bergstromhyperelliptic}, the square is cartesian (indeed, we can view the product $\prod_{G\in\G}\bl_{G}(Y)$ as a log scheme in which we give each factor the log structure of Example \ref{exam : main example of log scheme} and then this square is exactly the square of \cite[8.4.3]{bergstromhyperelliptic}). The bottom horizontal map is a closed inclusion by \cite[Proposition 2.13]{liwonderful}, it follows that the top horizontal map is also a closed inclusion since the square is cartesian. This concludes the proof.
\end{proof}


\section{Galois action}

Let $\ell$ be a prime number, let $\Q_\ell$ be the field of $\ell$-adic number. Let $q$ be a power of a prime number $p$ different from $\ell$. Let $\K$ be a characteristic zero field equipped with an embedding $\K\to\C$. In this case, we may consider the functor
\[X\mapsto Et(X\times_\K\overline{\K})^{\wedge}\]
from the category of schemes over $\K$ to the category of profinite homotopy types. Here $Et$ denotes the \'etale homotopy type functor (also known as the ``shape'' of the \'etale site of $X$) and $U\mapsto U^{\wedge}$ is the profinite completion functor. By functoriality, the object $Et(X\times_\K\overline{\K})^{\wedge}$ comes equipped with an action of the group $\mathrm{Gal}(\overline{\K}/\K)$. This induces a Galois action on the cochain complex $C^*(Et(X\times_\K\overline{\K})^{\wedge},\Z/\ell^n)$ and thus also on $C^*(Et(X\times_\K\overline{\K})^{\wedge},\Z_\ell)$ using the weak equivalence
\[C^*(Et(X\times_\K\overline{\K})^{\wedge},\Z_\ell)\simeq\mathrm{holim}_n C^*(Et(X\times_\K\overline{\K})^{\wedge},\Z/\ell^n).\]

We can lift the Frobenius along the surjective map
\[\rm{Gal}(\overline{\K}/\K)\twoheadrightarrow \rm{Gal}(\overline{\k}/\k)\]
and we obtain an automorphism of $\varphi$ of $Et(X\times_\K\overline{\K})^{\wedge}$. Moreover, Artin's comparison theorem together with the smooth base change theorem (see \cite[XVI, Theorem 4.1 and XII, Corollary 5.4]{sga4}) gives us a weak equivalence of profinite spaces
\[Et(X\times_\K\overline{\K})^{\wedge}\simeq X(\C)^{\wedge}\]

By work of Carchedi-Scherotzke-Sibilla-Talpo, we can extend this to log-schemes and we obtain the following proposition.

\begin{prop}
Let $\K\subset \C$ be a $p$-adic field with residue field $\k$ and $X$ be a smooth scheme over $\K$. Then the profinite completion of $\con^{AS}(X(\C))$ has an action of the absolute Galois group $\mathrm{Gal}(\overline{\K}/\K)$. In particular, it is equipped with an automorphism $\varphi$ lifting the Frobenius in $\mathrm{Gal}(\overline{\k}/\k)$.
\end{prop}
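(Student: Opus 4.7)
The plan is to post-compose the functor $\con^{log}(X)\colon\Delta\fin\op\to\cat{LogSch}_{\K}$ from Theorem~\ref{thm:main} with a profinite log-étale homotopy type functor, and then invoke the comparison theorem of Carchedi--Scherotzke--Sibilla--Talpo to identify the output with the profinite completion of $\con^{AS}(X(\C))$, carrying the Galois action along the way.

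First I would base change along $\mathrm{Spec}(\overline{\K})\to\mathrm{Spec}(\K)$, producing a functor $\Delta\fin\op\to\cat{LogSch}_{\overline{\K}}$. By functoriality of base change, the resulting diagram comes equipped with a $\mathrm{Gal}(\overline{\K}/\K)$-action through functor automorphisms, where the action on each $\logFM_\Phi(X)\times_\K\overline{\K}$ is the canonical one coming from the Galois action on $\overline{\K}$. Composing with the profinite log-étale homotopy type functor (the profinitely completed shape of Kato's log-étale site, as treated by Carchedi--Scherotzke--Sibilla--Talpo) yields a functor $\Delta\fin\op\to\mathrm{Pro}\text{-}\cat{Spaces}$ equipped with a $\mathrm{Gal}(\overline{\K}/\K)$-action.

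Next I would invoke the CSST comparison: for a smooth fs log scheme over $\overline{\K}\subset\C$, the profinite log-étale homotopy type is naturally weakly equivalent to the profinite completion of its Kato--Nakayama realization. Applied level-wise to $\con^{log}(X)$ and combined with Theorem~\ref{thm:main}(ii) (identifying the Kato--Nakayama realization with $\con^{AS}(X(\C))$) and Corollary~\ref{cor:KN(FM)=AS}, this produces a natural zigzag of weak equivalences between the functor obtained above and the levelwise profinite completion of $\con^{AS}(X(\C))$. Transporting the Galois action across this zigzag gives the desired action. The Frobenius lift $\varphi$ is then obtained by choosing any lift of the arithmetic Frobenius along the surjection $\mathrm{Gal}(\overline{\K}/\K)\twoheadrightarrow\mathrm{Gal}(\overline{\k}/\k)$, which is automatic for $p$-adic local fields.

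The main obstacle I expect is verifying that the CSST comparison is \emph{natural enough} in morphisms of log schemes for the level-wise equivalences to assemble into an equivalence of $\Delta\fin\op$-shaped diagrams, rather than a pointwise statement. A secondary issue is to ensure that profinite completion interacts correctly with the simplicial structure, so that the Galois action exists on the simplicial object representing the configuration category (and not merely on each simplicial level). Both concerns can be addressed by working $\infty$-categorically: one realizes the log-étale homotopy type as an $\infty$-functor $\cat{LogSch}_{\overline{\K}}\to\mathrm{Pro}\text{-}\cat{Spaces}^{\wedge}$, and similarly one represents the Kato--Nakayama realization as an $\infty$-functor, so that the CSST comparison becomes a natural transformation of $\infty$-functors, producing the desired Galois-equivariant equivalence of diagrams.
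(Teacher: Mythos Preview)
Your proposal is correct and follows essentially the same approach as the paper: use the log model of Theorem~\ref{thm:main}, base change to $\overline{\K}$ to obtain the Galois action, apply a profinite \'etale-type invariant, and then invoke the Carchedi--Scherotzke--Sibilla--Talpo comparison with the Kato--Nakayama space. The only cosmetic difference is that the paper phrases the intermediate object as the profinite \'etale homotopy type of the \emph{infinite root stack} of $\con^{log}(X)$, whereas you speak of the shape of the log-\'etale site; CSST identifies both with the profinite completion of the Kato--Nakayama realization, so the two formulations are interchangeable, and your explicit attention to naturality of the comparison across the $\Delta\fin\op$-diagram is a point the paper leaves implicit.
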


\begin{proof}
	There is an action on the profinite étale homotopy type of the infinite root stack of $\con^{log}(X)$. By \cite{CSST2017} this is homotopy equivalent to the profinite completion of the Kato--Nakayama space which is identified with the profinite completion of $\con^{AS}(X(\C))$ by Theorem \ref{thm:main}.
\end{proof}

We can extend this action to configuration categories of manifolds that are not necessarily complex algebrac varieties using the additivity result of the first author and Michael Weiss \cite{boavidaweissproduct}.

\begin{cons}\label{cons : galois action on product conf}
Let $\K\subset \C$ be a $p$-adic field and $X$ be a smooth scheme over $\K$ and $Y$ be a smooth manifold. Then we can consider the derived ``box product'' of the two configuration categories which is weakly equivalent to the configuration category of the product of manifolds:
\[\con^{AS}(X(\C)\times Y)\simeq \con^{AS}(X(\C))\boxtimes^L\con^{AS}(Y)\] 
by the main theorem of \cite{boavidaweissproduct}. We refer the reader to \cite{boavidaweissproduct} for details about the box product construction. Moreover by \cite[Theorem 5.5]{boavidaformality}, there is a weak equivalence
\[(\con^{AS}(X(\C))\boxtimes^L\con^{AS}(Y))^{\wedge}\simeq \con^{AS}(X(\C))^{\wedge}\boxtimes^L\con^{AS}(Y)^{\wedge}.\]
Using the Galois action on the first factor, we equip the diagram of $\ell$-complete spaces $(\con^{AS}(X(\C)\times Y))^{\wedge}$ with a Galois action.
\end{cons}

\section{A formality result}

Our goal now is to use the Galois action on the $\ell$-completion of $\con^{AS}(X(\C)\times\R)$ given by Construction \ref{cons : galois action on product conf} in order to prove a formality result.

\begin{defn}
We say that an element $\alpha$ of $\overline{\Q_\ell}$ is a \emph{Weil number of weight $n$} (relative to $q$) if for any choice of embedding $\sigma\colon\overline{\Q_\ell}\to\C$, we have
\[|\sigma(\alpha)|=q^{n/2}\]
\end{defn}

In the following we use the phrase ``Galois representation'' for a vector space over $\Q_\ell$ equipped with an automorphism $\varphi$. In practice, this automorphism will always come from the generator of $\mathrm{Gal}(\overline{\k}/\k)$ which explains the terminology.

\begin{defn}
Let $V$ be a $\Q_\ell$-vector space equipped with an automorphism $\varphi$. We say that $V$ is a \emph{pure Galois representation of weight $n$} if all the eigenvalues of $\varphi$ are Weil numbers of weight $n$.
\end{defn}

\begin{example}\label{exam : Tate twist}
For $n\in\mathbb{Z}$, we denote by $\Q_\ell(n)$ the Galois representation whose underlying vector space is the one-dimensional vector space $\Q_\ell$ and with $\varphi$ acting by multiplication by $q^n$. This is a Galois representation that is pure of weight $2n$.  More generally, for $V$ a $\Q_\ell$ vector space, we write $V(n)$ for the Galois representation with $V$ as an underlying vector space and $\varphi$ acting by multiplication by $q^n$.
\end{example}

The following is immediate.

\begin{prop}\label{prop : stability properties}

\begin{enumerate}
\item Let $0\to V'\to V\to V''\to 0$ be a short exact sequence of Galois representations. Then $V$ is pure of weight $n$ if and only if $V'$ and $V''$ are.
\item If $V$ and $V'$ are two Galois representations that are pure of weight $n$ and $m$ respectively, then $V\otimes V'$ is pure of weight $n+m$.
\item If $V$ is a Galois representation that admits a finite filtration by subrepresentations such that any term in the associated graded is pure of weight $n$, then $V$ is pure of weight $n$.
\end{enumerate}
\end{prop}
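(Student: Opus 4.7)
The entire proposition reduces to linear-algebra statements about the eigenvalues of the Frobenius-like automorphism $\varphi$, together with the trivial observation that being a Weil number of a given weight is preserved under the operations appearing in each item. I would attack the three parts separately, in the order they appear.

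For (i), the starting observation is that in a short exact sequence $0\to V'\to V\to V''\to 0$ of Galois representations, picking a splitting of vector spaces (not of $\varphi$-modules) writes $\varphi$ on $V$ as a block upper-triangular matrix with diagonal blocks $\varphi|_{V'}$ and the induced map on $V''$. Hence the characteristic polynomial factors as $\chi_V(t)=\chi_{V'}(t)\cdot\chi_{V''}(t)$, so the multiset of eigenvalues of $\varphi$ on $V$ (over $\overline{\Q_\ell}$) is the disjoint union of the eigenvalue multisets of $V'$ and $V''$. Purity of weight $n$ is by definition a condition on the eigenvalues, so the ``if and only if'' follows. The mild care needed is that every nonzero eigenvalue of $V'$ and $V''$ really does occur for $V$, which is what the factorisation of $\chi_V$ gives.

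For (ii), I would extend scalars to $\overline{\Q_\ell}$ and decompose $V$ and $V'$ into generalised eigenspaces of $\varphi$. The eigenvalues of $\varphi\otimes\varphi$ on $V\otimes V'$ are then the products $\alpha_i\beta_j$ where $\alpha_i$ runs over the eigenvalues of $\varphi$ on $V$ and $\beta_j$ over those of $\varphi$ on $V'$. For any embedding $\sigma\colon\overline{\Q_\ell}\to\C$,
\[|\sigma(\alpha_i\beta_j)|=|\sigma(\alpha_i)|\cdot|\sigma(\beta_j)|=q^{n/2}\cdot q^{m/2}=q^{(n+m)/2},\]
so every eigenvalue of $\varphi$ on $V\otimes V'$ is a Weil number of weight $n+m$.

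For (iii), I would induct on the length $r$ of the filtration $0=F_0\subset F_1\subset\dots\subset F_r=V$. The base case $r=1$ is the hypothesis. For the inductive step, apply (i) to $0\to F_{r-1}\to V\to F_r/F_{r-1}\to 0$: by induction $F_{r-1}$ is pure of weight $n$, the quotient is pure of weight $n$ by assumption, so $V$ is pure of weight $n$. There is no real obstacle here; the only thing to watch is that in (i) one must recall that the ``if'' direction also needs that eigenvalues of $V$ genuinely come from those of $V'$ together with $V''$, which is exactly the content of the characteristic-polynomial factorisation used above.
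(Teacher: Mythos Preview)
Your proof is correct; the paper itself gives no argument beyond declaring the proposition ``immediate,'' and what you have written is precisely the standard linear-algebra unpacking of that remark. The only tacit assumption worth flagging is finite-dimensionality of the vector spaces, which is harmless since every application in the paper is to $\ell$-adic cohomology groups of varieties.
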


Now, let $X$ be a complex smooth algebraic variety of complex dimension $d$. We assume that $X$ comes from a smooth algebraic variety over a $p$-adic field $\K\subset\C$ so that, as explained above, the profinite completion of $X(\C)$ comes equipped with an action of a Frobenius lift. In order to simplify notation, we shall now simply write $X$ for the differentiable manifold $X(\C)$. We assume further that the following is true.

\begin{assum}\label{assumption}
For each $i\geq 0$, the group $H^i(X;\Q_\ell)$ is a representation that is pure of weight $i$.
\end{assum}

\begin{rem}
This assumption is satisfied if $X$ is smooth and proper and admits a smooth and proper lift $\mathcal{X}$ to $\mathcal{O}_K$ the ring of integers of $K$. Indeed, in that case, the Frobenius action on $H^*(X\times_\K\overline{\K},\Q_\ell)$ coincides with the Frobenius action on $H^*(\mathcal{X}\times_{\mathcal{O}_\K}\overline{\k},\Q_\ell)$ which is of the required form by Deligne's proof of the Weil conjecture \cite{deligneweil}. In practice, if one starts with a smooth and proper scheme over the complex numbers, then it is in fact defined over a finitely generated ring and by usual ``spreading-out''argument, we may assume that it the base change of a smooth and proper scheme $\mathcal{X}$ defined over $\mathcal{O}_\K$ for $\K$ a $p$-adic field with $p\neq \ell$.
\end{rem}

\begin{prop}\label{prop : relative term}
Under assumption \ref{assumption}, the relative cohomology group $H^{k}(X^2,\Conf_2(X);\Q_\ell)$ is a pure representation of weight $k$ for each $k\geq 0$.
\end{prop}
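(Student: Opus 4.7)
The plan is to identify the relative cohomology via a Thom/Gysin isomorphism for the diagonal, and then to check that the weight shift introduced by the Tate twist is exactly what is needed.

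First I would note that the complement $X^2 \smallsetminus \mathrm{Conf}_2(X)$ is the diagonal $\Delta \subset X^2$, which is a smooth closed subvariety of (complex) codimension $d$ isomorphic to $X$. Its normal bundle in $X^2$ is the tangent bundle $TX$ pulled back along $\Delta \cong X$. So I would appeal to the Thom/purity isomorphism in $\ell$-adic cohomology, which, being defined by cup product with an algebraic Thom class, is Galois-equivariant and has a canonical Tate twist. Concretely, this gives a Galois-equivariant isomorphism
\[
H^{k}(X^2,\mathrm{Conf}_2(X);\Q_\ell) \;\cong\; H^{k-2d}(X;\Q_\ell)(-d).
\]
(On the $\ell$-adic étale side this is the usual absolute purity / Gysin isomorphism; under the comparison with singular cohomology it recovers the topological Thom isomorphism for the normal bundle of the diagonal.)

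Next I would apply \textbf{Assumption}~\ref{assumption}: the group $H^{k-2d}(X;\Q_\ell)$ is pure of weight $k-2d$. By Example~\ref{exam : Tate twist}, tensoring with $\Q_\ell(-d)$ shifts the weight by $+2d$, so the right-hand side is pure of weight $(k-2d)+2d=k$. Using Proposition~\ref{prop : stability properties}(ii) (or just the definition), one concludes that $H^{k}(X^2,\mathrm{Conf}_2(X);\Q_\ell)$ is pure of weight $k$, as required.

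The main thing to be careful about is the Galois equivariance of the identification of relative cohomology with a Thom/Gysin group and the appearance of the correct Tate twist. One should confirm that $H^{k}(X^2,\mathrm{Conf}_2(X);\Q_\ell)$ really does agree, as a Galois representation, with the étale cohomology with supports $H^{k}_{\Delta}(X^2;\Q_\ell)$, and that the latter is computed by absolute purity as $H^{k-2d}(\Delta;\Q_\ell)(-d)$; modulo this standard dictionary the argument is immediate. As a sanity check, one can also obtain the claim by running the long exact sequence of the pair $(X^2,\mathrm{Conf}_2(X))$ together with the Künneth formula $H^*(X^2;\Q_\ell)\cong H^*(X;\Q_\ell)^{\otimes 2}$: by Proposition~\ref{prop : stability properties}(i)--(ii), the cohomology of $X^2$ is pure (of weight $k$ in degree $k$), and purity of the relative term then follows from the purity of $H^{k}(\mathrm{Conf}_2(X);\Q_\ell)$ in the appropriate degrees once the Gysin input has been used.
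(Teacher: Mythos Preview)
Your proposal is correct and follows essentially the same approach as the paper: apply the Thom/purity isomorphism for the diagonal $\Delta\cong X$ in $X^2$ to identify the relative cohomology with $H^{k-2d}(X;\Q_\ell)$ twisted by a Tate character of weight $2d$, then invoke Assumption~\ref{assumption}. One small caveat: in this paper's convention (Example~\ref{exam : Tate twist}) the symbol $\Q_\ell(n)$ has weight $+2n$, so the twist appearing is written as $\Q_\ell(d)$ rather than $\Q_\ell(-d)$; your weight computation is correct, but your citation of that example with the sign $(-d)$ is inconsistent with the paper's notation.
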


\begin{proof}
By the Thom isomorphism in \'etale cohomology, we have a Galois equivariant isomorphism
\[H^{k}(X^2,\Conf_2(X);\Q_\ell)\cong H^{k-2d}(X;\Q_\ell)\otimes\Q_{\ell}(d)\]
where $\Q_\ell(d)$ is defined in Example \ref{exam : Tate twist}
\end{proof}

\begin{thm}
Under assumption \ref{assumption}, the Galois representation $H^i(\Conf_n(X\times\R);\Q_\ell)$ is pure of weight $i$ for any value of $i$.
\end{thm}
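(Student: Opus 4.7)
My plan is to prove the theorem by induction on $n$. The base case $n=1$ is immediate: $\Conf_1(X\times\R)\simeq X$, and the claim is Assumption \ref{assumption}. For the inductive step with $n\geq 2$, I would use the open immersion
\[\Conf_n(X\times\R)\hookrightarrow \Conf_{n-1}(X\times\R)\times(X\times\R),\]
whose closed complement is the disjoint union $\bigsqcup_{i=1}^{n-1}\Sigma_i$ of the images of the sections $s_i$ sending $(q_1,\ldots,q_{n-1})$ to $\bigl((q_1,\ldots,q_{n-1}),q_i\bigr)$. Each $\Sigma_i$ is a closed submanifold of real codimension $2d+1$ (with $d=\dim_\C X$) isomorphic to $\Conf_{n-1}(X\times\R)$. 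All three pieces of structure---the open immersion, the sections, and the decomposition of the complement---are Galois-equivariant, being assembled from the forgetful structure maps of the configuration category treated in Proposition \ref{prop : compatibility with forgetting points} and the functoriality of the box product Galois action in Construction \ref{cons : galois action on product conf}.

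Next, I would plug into the Gysin long exact sequence of the pair. The normal bundle of $\Sigma_i$ is the pullback of $T(X\times\R)\cong TX\oplus T\R$ along the $i$-th coordinate, where $TX$ is a complex vector bundle of rank $d$ and $T\R$ is a trivial real line. Applying the Thom isomorphism, the complex factor contributes a Tate twist $\Q_\ell(-d)$ while the real factor contributes none, yielding (and generalizing Proposition \ref{prop : relative term}):
\[H^{k+1}\bigl(\Conf_{n-1}(X\times\R)\times(X\times\R),\,\Conf_n(X\times\R);\,\Q_\ell\bigr)\cong\bigoplus_{i=1}^{n-1} H^{k-2d}\bigl(\Conf_{n-1}(X\times\R);\,\Q_\ell(-d)\bigr).\]
By the inductive hypothesis this is pure of weight $(k-2d)+2d=k$. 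Similarly, by the inductive hypothesis, Assumption \ref{assumption}, Künneth, and Proposition \ref{prop : stability properties}(ii), the group $H^k\bigl(\Conf_{n-1}(X\times\R)\times(X\times\R);\Q_\ell\bigr)$ is pure of weight $k$. The Gysin long exact sequence therefore exhibits $H^k(\Conf_n(X\times\R);\Q_\ell)$ as an extension of two pieces of pure weight $k$, and Proposition \ref{prop : stability properties}(i) completes the induction.

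The hard part will be justifying Galois-equivariance of the Thom isomorphism in this mixed algebro-topological setting, since neither the ambient space $\Conf_{n-1}(X\times\R)\times(X\times\R)$ nor the subspaces $\Sigma_i$ are algebraic varieties (the $\R$-factor being purely topological). Specifically, one must verify that the orientation class of the normal bundle carries weight $2d$, i.e.\ equals $\Q_\ell(-d)$ as a Galois representation. This should reduce to the statement that the class originates entirely from the complex factor $TX$ (via the Kato--Nakayama realization of the log-scheme model of $X$), whereas the $T\R$-factor contributes trivially since $\con^{AS}(\R)$ has no intrinsic algebraic structure in the box product construction.
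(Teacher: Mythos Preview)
Your Fadell--Neuwirth induction is a natural and structurally sound route, but it is genuinely different from the paper's argument. The paper does not induct on $n$: it invokes the presentation $H^*(\Conf_n(X\times\R))\cong H^*(X^n)[x_{i,j}]/(\text{relations})$ from \cite{BHK-homology}, with $|x_{i,j}|=2d$, so that multiplicativity and Proposition~\ref{prop : stability properties} reduce everything to the single claim that $H^{2d}(\Conf_2(X\times\R))$ is pure of weight $2d$. That claim is then handled by importing a Galois-equivariant homotopy pushout square from \cite[Proposition~7.5]{boavidaformality} expressing $\Conf_2(X\times\R)$ in terms of $\Conf_2(X)$, $X^2$ and $\Conf_2(\R)$. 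The point is that this square is manufactured directly out of the box-product decomposition, so its equivariance is automatic, and it reduces the weight computation to the purely algebraic Thom isomorphism of Proposition~\ref{prop : relative term} together with a short diagram chase.

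The gap you flag at the end is real and is precisely what the paper's strategy is designed to sidestep. Your Gysin step needs a Galois-equivariant Thom isomorphism for the normal bundle of $\Sigma_i$ inside the non-algebraic ambient space, with orientation class of weight exactly $2d$ (the $T\R$ summand contributing weight $0$). Nothing in the paper supplies this: Proposition~\ref{prop : relative term} treats only the algebraic pair $(X^2,\Conf_2(X))$, and the functoriality of Proposition~\ref{prop : compatibility with forgetting points} and Construction~\ref{cons : galois action on product conf} gives equivariance of forgetful maps---hence of your open inclusion and of the sections $s_i$---but says nothing about cofibres or Thom classes in the mixed setting. Your intuition that the $\R$-factor contributes trivially is correct, since the box-product Galois action is trivial on $\con^{AS}(\R)$, but turning this into a rigorous statement about the relative cohomology is essentially a higher-$n$ analogue of the equivariant pushout from \cite{boavidaformality}, and establishing it is not lighter work than the paper's route. (Incidentally, in the paper's convention of Example~\ref{exam : Tate twist} the twist should be written $\Q_\ell(d)$ rather than $\Q_\ell(-d)$.)
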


\begin{proof}
In order to simplify notations, we drop the $\Q_\ell$ coefficients from the notation. The manifold $X\times\R$ has vanishing diagonal class. We thus have a presentation 
\[H^*(\Conf_n(X\times\R))\cong H^*(X^n)[x_{i,j}]/(relations)\]
where the classes $x_{i,j}$ are of degree $2d$ and are indexed by pairs $1\leq i<j\leq n$. This presentation can be found in \cite{BHK-homology}. The relations shall not concern us here. From this presentation and Proposition \ref{prop : stability properties}, we see that it is enough to prove the claim up to degree $2d$ since everything in higher degree is a product of classes of degree $\leq 2d$. By our assumption on $X$ and the K\"unneth isomorphism, the theorem is true up to degree $2d-1$. It thus remains to understand the Frobenius action on the classes $x_{i,j}$. Since the map
\[pr_{ij}:\Conf_n(X\times\R)\to\Conf_2(X\times\R)\]
is part of the configuration category structure, it becomes Galois equivariant after $\ell$-completion. The map $pr_{ij}^*$ sends $x_{12}\in H^{2d}(\Conf_2(X\times\R))$ to $x_{ij}$, therefore, we see that it is enough to prove that $H^{2d}(\Conf_2(X\times\R))$ is pure of weight $2d$. For this, using the fact that $X\times\R$ has vanishing diagonal class, we may use the short exact sequence (see \cite{BHK-homology})
\[0\to  H^{2d}(X^2)\to H^{2d}(\Conf_2(X\times\R))\to H^{2d+1}(X^2,\Conf_2(X\times \R))\to 0\]
We can then use \cite[Proposition 7.5]{boavidaformality} which claims that there is a Galois equivariant homotopy pushout square
\[
\begin{tikzcd}
\Conf_2(X)\times\Conf_2(\R)\ar[d]\ar[r]&\Conf_2(X)\times \R^2\ar[d]\\
X^2\times\Conf_2(\R)\ar[r]&\Conf_2(X\times\R)
\end{tikzcd}
\]
Let us denote by $V(i)$ the relative cohomology group $H^i(X^2,\Conf_2(X))$. Thanks to Proposition \ref{prop : relative term}, this representation is pure of weight $i$. By the above pushout square, the group $V(i)^{\oplus 2}$ is the relative cohomology group $H^i(\Conf_2(X\times\R),\Conf_2(X)\times\R^2)$. We thus have an exact sequence
\[V(2d)^{\oplus 2}\to H^{2d}(\Conf_2(X\times\R))\to H^{2d}(\Conf_2(X))\xrightarrow{\alpha} V(2d+1)^{\oplus 2}\]
Moreover, observe from the above pushout square that the map
\[\alpha:H^{2d}(\Conf_2(X))\to V(2d+1)^{\oplus 2}\]
factors as
\[H^{2d}(\Conf_2(X))\to H^{2d}(\Conf_2(X))^{\oplus 2}\to V(2d+1)^{\oplus 2}\]
where the first map is simply the diagonal map and the second map is the direct sum of two copies of the connecting map
\[H^{2d}(\Conf_2(X))\to H^{2d+1}(X^2,\Conf_2(X))\]
The kernel of this connecting map is pure of weight $2d$ since this is isomorphic to $H^{2d}(X^2)$. Therefore the kernel of $\alpha$ is pure of weight $2d$ and we deduce from the exact sequence above that $H^{2d}(\Conf_2(X\times\R))$ sits in the middle of a short exact sequence in which both other terms are pure of weight $2d$. By Proposition \ref{prop : stability properties} $H^{2d}(\Conf_2(X\times\R))$ is pure of weight $2d$ as desired. 
\end{proof}

\begin{cor}\label{cor}
The algebra $C^*(\Conf_n(X\times\R);\Q_\ell)$
is $\Sigma_n$-equivariantly formal as an $E_\infty$-algebra over $\Q_\ell$. In particular, $C^*(\Conf_n(X\times\R); \Q)$ is a formal cdga over $\Q$.
\end{cor}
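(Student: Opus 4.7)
The plan is to apply a Frobenius-weight formality criterion to the $E_\infty$-algebra $C^*(\Conf_n(X\times\R);\Q_\ell)$, using the purity established in the previous theorem. By Construction \ref{cons : galois action on product conf}, the profinite completion of $\con^{AS}(X\times\R)$ carries a Galois action, and in particular an action of a Frobenius lift $\varphi$; since this action is natural in the configuration category structure, it yields an automorphism of the $E_\infty$-algebra $C^*(\Conf_n(X\times\R);\Q_\ell)$ which commutes with the $\Sigma_n$-action by relabelling points. By the previous theorem, $\varphi$ acts on $H^i$ with all eigenvalues of (complex) absolute value $q^{i/2}$, so the spectra on $H^i$ and $H^j$ are disjoint whenever $i\neq j$.

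The key input is then a formality criterion of the following type: any $E_\infty$-algebra $A$ over $\Q_\ell$ equipped with an automorphism whose spectrum on $H^i(A)$ is disjoint from that on $H^j(A)$ for $i\neq j$ is intrinsically formal, and the construction of the formality quasi-isomorphism can be made natural in automorphisms of the data, so if $A$ additionally carries a compatible group action the formality is equivariant. Such ``weight implies formality'' statements go back to Deligne--Griffiths--Morgan--Sullivan in the Hodge-theoretic setting and have been ported to the $\ell$-adic and operadic setting by, e.g., Guill\'en--Navarro, Petersen, and Cirici--Horel. One decomposes $A$ into generalized eigenspaces of $\varphi$ and uses the disjointness of spectra to show that all obstructions to building a zigzag of $E_\infty$-quasi-isomorphisms between $A$ and $H^*(A)$ vanish; equivalently, the Frobenius-eigenspace projector picks out a preferred section of $C^*\to H^*$ up to coherent homotopy. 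Applying this criterion with the commuting $\Sigma_n$-action in place yields the first assertion.

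For the second assertion, we descend from $\Q_\ell$ to $\Q$. In characteristic zero $E_\infty$-formality is equivalent to cdga formality via the standard rectification, and cdga formality of a cdga of finite type is insensitive to extension of scalars along inclusions of characteristic-zero fields (Halperin--Stasheff / Sullivan); since $C^*(\Conf_n(X\times\R);\Q_\ell)\simeq C^*(\Conf_n(X\times\R);\Q)\otimes_\Q\Q_\ell$ is formal, so is $C^*(\Conf_n(X\times\R);\Q)$. The main obstacle is really a bookkeeping point rather than a genuinely hard step: one must justify that the Galois action from Construction \ref{cons : galois action on product conf} gives an honest action of $\varphi$ on the $E_\infty$-algebra of cochains (not merely on cohomology) compatibly with the $\Sigma_n$-symmetry, so that the formality criterion applies equivariantly. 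Given that, all remaining arguments are formal consequences of the weight estimates of the previous theorem.
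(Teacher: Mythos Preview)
Your proposal is correct and follows essentially the same approach as the paper: the paper's proof is a one-line citation (``This follows from the previous computation together with \cite{boavidahorelembindisks}''), and you have simply unpacked what that citation provides, namely the Frobenius-weight formality criterion applied to the $E_\infty$-algebra with its compatible $\Sigma_n$-action, together with the standard descent of formality from $\Q_\ell$ to $\Q$. Your identification of the bookkeeping point (that the Galois action must be on cochains, not just cohomology, and compatible with $\Sigma_n$) is exactly what the cited reference handles.
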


\begin{proof}

This follows from the previous computation together with \cite{boavidahorelembindisks}. 
\end{proof}

We also get the following consequence.

\begin{prop}
Let $X$ be a smooth and proper algebraic variety over the complex numbers and assume that $H^*(X,\Q)$ is a Koszul algebra. Then $\Conf_n(X\times\R)$ is a Koszul space in the sense of Berglund (see \cite{berglundkoszul}).
\end{prop}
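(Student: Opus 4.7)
The strategy is to invoke Berglund's intrinsic characterization of Koszul spaces \cite{berglundkoszul}: a simply connected rational space of finite type is Koszul if and only if it is formal and its rational cohomology algebra is Koszul as a graded algebra. The formality half is given by Corollary \ref{cor}, so the substantive point will be Koszulness of the algebra $H^*(\Conf_n(X\times\R);\Q)$.

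First I would verify that the hypotheses of Corollary \ref{cor} are actually met. Since $X$ is smooth and proper over $\C$, a standard spreading-out argument shows that $X$ descends to a smooth proper scheme over $\mathcal{O}_\K$ for some $p$-adic field $\K\subset\C$ with residue characteristic different from $\ell$, as explained in the remark following Assumption \ref{assumption}. By Deligne's proof of the Weil conjectures, the Frobenius then acts on $H^i(X(\C);\Q_\ell)$ with eigenvalues that are Weil numbers of weight $i$, so Assumption \ref{assumption} is satisfied and Corollary \ref{cor} yields formality of $C^*(\Conf_n(X\times\R);\Q)$ as a cdga over $\Q$.

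For the second input, I would use the presentation of $H^*(\Conf_n(X\times\R);\Q)$ recorded in \cite{BHK-homology} and recalled in the previous proof. Because $X\times\R$ has odd real dimension $2d+1$, its Euler class vanishes, so this algebra is the quotient of $H^*(X;\Q)^{\otimes n}$ adjoined with generators $x_{ij}$ in degree $2d$ (for $1\le i<j\le n$), modulo the Arnold/3-term relations, the squaring relations $x_{ij}^2=0$, and the ``mixed'' relations $(p_i^*a-p_j^*a)x_{ij}=0$ for $a\in H^*(X;\Q)$. This presentation is quadratic, and to deduce Koszulness from the assumed Koszulness of $H^*(X;\Q)$ I would combine three facts: (a) tensor products of Koszul algebras are Koszul, hence $H^*(X;\Q)^{\otimes n}$ is Koszul; (b) the Orlik--Solomon/Arnold algebra $H^*(\Conf_n(\R^{2d+1});\Q)$ is Koszul (classical); and (c) Koszulness is preserved under a ``Koszul extension'' that adds new generators whose defining relations are quadratic and compatible with a Koszul filtration on the base algebra. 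I would verify (c) in our case by exhibiting an explicit PBW basis built by ordering the $x_{ij}$ lexicographically on top of a Koszul filtration of $H^*(X;\Q)^{\otimes n}$.

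The main technical obstacle will be step (c): namely, checking that the mixed relations $(p_i^*a-p_j^*a)x_{ij}=0$ close up under Gr\"obner/S-polynomial reduction to a quadratic ideal, so that the resulting algebra has exactly the Hilbert series predicted by Koszul duality. The key geometric input that makes the calculation work is the degeneration of the Bendersky--Gitler spectral sequence for targets with vanishing Euler class, which gives a tensor-product factorization of the Hilbert series that in turn implies the relation $P(t)\cdot E(-t)=1$ (with $E$ the Hilbert series of the Koszul dual) characterizing Koszulness of a quadratic algebra. Together with the formality established above, this verifies Berglund's criterion.
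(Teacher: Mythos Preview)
Your overall strategy coincides with the paper's: invoke Berglund's criterion, get formality from Corollary~\ref{cor}, and reduce to showing that $H^*(\Conf_n(X\times\R);\Q)$ is a Koszul algebra. The divergence is only in how you establish this last algebraic fact.

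The paper dispatches Koszulness of the algebra in one line by citing \cite[Corollary~2.3]{bezrukavnikovkoszul}, which says precisely that if $H^*(X;\Q)$ is Koszul then the cohomology ring of configuration spaces in an odd-dimensional thickening (equivalently, the quadratic algebra built from $H^*(X)^{\otimes n}$, the Arnold generators $x_{ij}$, and the mixed relations you write down) is again Koszul. Your steps (a)--(c) and the closing Hilbert-series argument amount to reproving Bezrukavnikov's result by hand. The outline is sound---tensor products of Koszul algebras are Koszul, the Arnold algebra is Koszul, and one can in principle build a PBW basis or Koszul filtration accommodating the mixed relations---but you correctly flag that the confluence check for the mixed relations is where the real work lies, and you do not carry it out. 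So the proposal is not wrong, but the hard step is left as a promise; replacing your (c) and the Hilbert-series paragraph with a reference to Bezrukavnikov gives exactly the paper's proof and closes the argument cleanly.

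One small caution: you invoke Berglund's criterion in the form ``simply connected of finite type''. The paper does not impose simple connectivity (and its examples include curves of positive genus), so if you want to state the criterion carefully you should either check the relevant nilpotence/finite-type hypotheses for $\Conf_n(X\times\R)$ or phrase the conclusion, as Berglund also does, purely in terms of the Sullivan minimal model rather than the underlying space.
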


\begin{proof}
By the previous corollary this space is formal, so it suffices to show that $H^*(\Conf_n(X\times\R))$ is a Koszul algebra. This follows from \cite[Corollary 2.3]{bezrukavnikovkoszul}.
\end{proof}

As a consequence, we obtain a presentation for the Lie algebra of rational homotopy groups using \cite[Theorem 3]{berglundkoszul}. Examples of algebraic varieties $X$ satisfying our assumption include all smooth projective curves and all simply connected smooth projective algebraic surfaces.


\bibliographystyle{alpha}
\bibliography{conf}

\vspace{10pt}
\hrule

\end{document}